\definecolor{darkblue}{rgb}{0.0, 0.0, 0.55}
\crefname{section}{§}{§§}
\renewcommand{\subset}{\subseteq}
\newtheorem{theorem}{Theorem}[section]
\newtheorem{lemma}[theorem]{Lemma}
\newtheorem{definition}[theorem]{Definition}
\newtheorem{remark}[theorem]{Remark}
\newtheorem{question}[theorem]{Question}
\newtheorem{lem}[theorem]{Lemma}
\newtheorem{Cor}[theorem]{Corollary}
\newtheorem*{lemma*}{Lemma}
      \theoremstyle{plain}
\def\beq{\begin{equation}}
\def\eeq{\end{equation}}
\numberwithin{equation}{section}
\def\beq{\begin{equation}}
\def\eeq{\end{equation}}
\def\hbeta{{\hat{\beta}}}
\def\hgamma{{\hat{\gamma}}}
\def\hY{{\hat{Y}}}
\def\fK{{\mathfrak{K}}}
\def\fE{{\mathfrak{E}}}
\def\arv{\partial^{\mathrm{Arv}}}
\def\comat{\mathrm{co}^\mathrm{mat}}
\def\smlfg{{SM_\ell(\F)^g}}
\newcommand{\egensub}[1]{ {\fE_{#1} (\cD_A)} }
\newcommand{\gensub}[1]{ { \fK_{#1} (\cD_A) } }
\newcommand{\disb}[2]{ { \frak{D}_{#2} (#1) } }
\newcommand{\gengam}[2]{ {\Gamma_{#1,#2} (\cD_A) } }
\def\epsilon{\varepsilon}
\def\bem{\begin{pmatrix}}
\def\eem{\end{pmatrix}}
\newcommand{\gfs}{generalized free spectrahedron\xspace}
\title[Free Extreme points span generalized free spectrahedra]{Free extreme points span generalized free spectrahedra given by compact coefficients}
\author[E. Evert]{Eric Evert}
\address{Eric Evert, Northwestern University\\
  2233 Tech Drive \\
  Evanston, IL 60208
   }
   \email{eric.evert@northwestern.edu}
\subjclass[2010]{Primary 47L07. Secondary 46L07, 90C22}
\date{\today}
\keywords{matrix convex set, extreme point, dilation theory, linear matrix inequality (LMI), spectrahedron, free Minkowski theorem, Arveson boundary}
\begin{document}
 
\begin{abstract}
Matrix convexity generalizes convexity to the dimension free setting and has connections to many mathematical and applied pursuits including operator theory, quantum information, noncommutative optimization, and linear control systems. In the setting of classical convex sets, extreme points are central objects which exhibit many important properties. For example, the well-known Minkowski theorem shows that any element of a closed bounded convex set can be expressed as a convex combination of extreme points. Extreme points are also of great interest in the dimension free setting of matrix convex sets; however, here the situation requires more nuance. 

In the dimension free setting, there are many different types of extreme points. Of particular importance are free extreme points, a highly restricted type of extreme point that is closely connected to the dilation theoretic Arveson boundary. If free extreme points span a matrix convex set through matrix convex combinations, then they satisfy a strong notion of minimality in doing so. However, not all closed bounded matrix convex sets even have free extreme points. Thus, a major goal is to determine which matrix convex sets are spanned by their free extreme points. 

Building on a recent work of J. W. Helton and the author which shows that free spectrahedra, i.e., dimension free solution sets to linear matrix inequalities, are spanned by their free extreme points, we show that if one considers linear operator inequalities that have compact operator defining tuples, then the resulting ``generalized'' free spectrahedra are spanned by their free extreme points. In addition, we show that for general matrix convex sets that are closed under complex conjugation, free extreme points spanning over the complexes can be reduced to free extreme points spanning over the reals.

\end{abstract}

\maketitle

\newpage

\section{Introduction}

A linear matrix inequality (LMI) is an inequality of the form
\[
\LA(x) := I- A_1 x_1 - \dots A_g x_g \succeq 0,
\]
where the $A_1,\dots,A_g$ are $d \times d$ symmetric matrices and the $x_1,\dots,x_g$ are real numbers. The solution set of such an LMI, i.e., the set of $x \in \R^g$ such that $\LA(x) \psd 0$, is a convex set called a spectrahedron. Spectrahedra have connections to many areas including convex analysis, optimization, and linear systems control \cite{BEFB94}.

Linear matrix inequalities easily extend to the case where each $X_\ell$ is a $n \times n$ self-adjoint matrix whose product with $A_i$ is the Kronecker product. That is,
\[
\LA(X):= I_{dn} - A_1 \otimes X_1 - \dots - A_g \otimes X_g \psd 0. 
\] 
A {\it free spectrahedron} is the set of all $g$-tuples of self-adjoint matrices (of any size $n \times n$) such that $\LA(X) \psd 0$. The term ``free'' here refers to both the fact that the linear matrix inequality $\LA(X) \psd 0$ is defined independent of the size of the matrices $X_\ell$, and the fact that the corresponding free spectrahedron contains matrix tuples of all sizes $n \times n$. Free spectrahedra naturally arise in problems related to spectrahedral inclusion \cite{HKM13,DDSS17,FNT17,Zal17,HKMS19}, linear control engineering \cite{HMPV09,dOHMP09}, and quantum information \cite{BN18}.

Free spectrahedra are prototypical examples of sets that exhibit a dimension-free type of convexity. Namely, free spectrahedra are {\it matrix convex}, i.e., are closed under {\it matrix convex combinations} where contraction matrices summing to the identity play the role of the convex coefficients.  An important feature of matrix convex combinations is that they allow for combinations of matrix tuples of different sizes. This means that the geometry of any individual level of a matrix convex set is connected to that of all other levels of the set. 

Mirroring the role of extreme points in classical convex sets, extreme points play an important role in the understanding of matrix convex sets \cite{EHKM18,FHL18,PS19,EFHY21,DP22}. However, in the dimension free setting of matrix convexity, there are many types of extreme point. A central goal in the study of matrix convex sets is to determine the most restricted type of extreme point that can recover any element of a given closed bounded matrix convex set through matrix convex combinations. That is, one searches for the strongest possible extension of the classical Minkowski theorem to the dimension free setting. 

Of particular note in this pursuit are matrix extreme points \cite{WW99,F00,F04} and free extreme points \cite{KLS14}. Matrix extreme points are known to span general closed bounded matrix convex sets through matrix convex combinations \cite{K+,HL21}; however, they are not necessarily a minimal spanning set. Free extreme points, on the other hand, are more restricted than matrix extreme points \cite{EHKM18,EEHK22}. Stated informally, a free extreme point of a matrix convex set is an element that can only be expressed via trivial matrix convex combinations of other elements. Free extreme points are of great interest due both to their close connection \cite{EHKM18} to the dilation theoretic Arveson boundary \cite{A69,A08} and to the fact that they necessarily form a minimal spanning set if they do span. The short coming of free extreme points is that, when restricting to finite dimensions, they can fail to span a given closed bounded matrix convex set. In fact, there are closed bounded matrix convex sets which have no (finite dimensional) free extreme points at all \cite{Evert17}. Thus an important question in the pursuit of a dimension free Minkowski theorem is ``which matrix convex sets are the matrix convex hull of their free extreme points''. 

Recently, \cite{EH19} showed that in the case of bounded free spectrahedra, free extreme points span. Furthermore, \cite{EH19} showed that there is a dimension bound on the {\it sum of sizes} of free extreme points needed to express an element of a bounded free spectrahedron as a matrix convex combination of free extreme points. A main contribution of the present article is an extension of this result to ``generalized free spectrahedra''. More precisely, we show that bounded solution sets to linear operator inequalities with compact defining tuples, are the matrix convex hull of their free extreme points. An informal statement of this result is as follows. 

\begin{adjustwidth}{1cm}{} {\it Let $K$ bounded generalized free spectrahedron over $\F=\R$ or $\F=\C$ and assume $K$ is closed under complex conjugation. If $X \in K$ is $g$-tuple of self-adjoint $n \times n$ matrices, then $X$ is a finite matrix convex combination of free extreme points of $K$ whose sum of sizes is bounded above by $n(g+1)$ if $\F=\R$ and by $2n(g+1)$ if $\F=\C$.}
\end{adjustwidth}

\noindent We point the reader to the upcoming Theorem \ref{theorem:AbsSpanGeneralized} for a formal statement of this result. Also see Section \ref{sec:gendefs} for a formal definition of generalized free spectrahedra.

 The proof of Theorem \ref{theorem:AbsSpanGeneralized} follows the same approach used in \cite{EH19} which itself was inspired by works such as \cite{Agl88,DM05,DK15}. Namely, we first work over $\R$ and show the existence of a special type of dilation called a {\it maximal $1$-dilation}. We then show that constructing finite sequence of at most $ng$ maximal $1$-dilations of $X$ results in a so called {\it Arveson boundary point} of $K$. Determining the irreducible components of the resulting Arveson boundary point yields an expression of $X$ as a matrix convex combination of free extreme points of $K$. A fact that enables this extension is that if $A=(A_1,\dots,A_g)$ is a tuple of compact self-adjoint operators and $L_A(X) \succeq 0$, then $L_A(X)$ has a smallest nonzero eigenvalue. 

The extension to the complex setting is achieved by a new result, Theorem \ref{theorem:ComplexReductionToReals}, which shows that for matrix convex sets closed under complex conjugation, real free extreme points span the real part of the set if an only if the set is spanned by its free extreme points over the complexes. Similar to the methodology used in \cite{EH19}, this is accomplished using the standard embedding of tuples of $n \times n$ complex self-adjoint matrices to tuples of $2n \times 2n$ real symmetric matrices together with showing that, for sets closed under complex conjugation, a free extreme point over the reals is also a free extreme point over the complexes. We emphasize that Theorem \ref{theorem:ComplexReductionToReals} is that it is proved for general matrix convex sets closed under complex conjugation, while \cite{EH19} only treated the case of free spectrahedra.

\subsection{Related works}
The study of extreme points of matrix convex sets goes back Arveson who, (in the language of completely positive maps on operator systems) conjectured that if one extends to infinite dimensional levels, then infinite dimensional free extreme points span \cite{A69,A72}. This infinite dimensional question was studied by a number of authors \cite{Ham79,Agl88,MS98,DM05,A08,KLS14,Pas22} until it was finally settled in 2015 by Davidson and Kennedy \cite{DK15}. Work on extreme points in this setting has since continued, e.g., see \cite{DK19} which develops Choquet theory for the infinite dimensional analog of matrix convex sets.

Matrix convexity is also closely related to the rapidly growing area of free analysis \cite{BGM06,Pop06,dOHMP09,Voi10,KVV14,AM15,AJP20,JMS21}. Here the goal is to extend classical geometric and function theoretic results to the noncommutative setting where one considers functions whose inputs are $g$-tuples of matrices or operators. This study was largely pushed by Voiculescu's introduction of free probability which has since been used to great effect in random matrix theory \cite{Voi91,MS17}. Other closely related topics include noncommutative optimization \cite{DLTW08,PNA10,BKP16,MBM21,WM21} and quantum information and games \cite{BN18,BJN22,GLL19,CDN20,PR21}.

\subsection{Reader's guide}
 
 Section \ref{sec:defs} introduces  our definitions and notation and gives a formal statement of our main results, Theorem \ref{theorem:AbsSpanGeneralized} and Theorem \ref{theorem:ComplexReductionToReals}. Section \ref{sec:GenProofs} introduces the notion of a maximal $1$-dilation for generalized free spectrahedra and shows that maximal $1$-dilations in a bounded generalized free spectrahedra reduce the dimension of the dilation subspace. Section \ref{sec:RealVSComplex} shows how to reduce from spanning over the complexes to spanning over the reals for matrix convex sets that are closed under complex conjugation.

\section{Definitions, notation, and main results}
\label{sec:defs}

\def\D{\mathcal{D}}
\newcommand{\M}[3]{M_{#2 \times #3}(\F)^{#1}}

Throughout the article we let $\cH$ denote a Hilbert space over $\F=\R$ or $\F=\C$. $B(\cH)$ and $SA(\cH)$ respectively denote the sets of bounded operators on $\cH$ and bounded self-adjoint operators on $\cH$. Additionally let $SA(\cH)^g$ denote the set of $g$-tuples of the form $X=(X_1,\dots,X_g)$ where each $X_\ell$ is a bounded self-adjoint operator on $\cH$. Say tuples $X, Y \in SA(\cH)^g$ are \df{unitarily  equivalent} if there exists a unitary $U \in B(\cH)$ such that 
\[
U^*XU = (U^* X_1 U,\dots, U^* X_g U) = (Y_1,\dots,Y_g) = Y.
\]
A bounded operator $B \in B(\cH)$ is \df{positive semidefinite} if it is self-adjoint and for any $v \in \cH$, one has $\langle B v , v \rangle \geq 0$. In the case that $B$ is a compact self-adjoint operator, this is equivalent to all of $B$'s eigenvalues being nonnegative. Given two bounded self-adjoint operators $B_1,B_2 \in SA(\cH)$ let $B_1 \psd B_2$ denote that $B_1 - B_2$ is positive semidefinite.

While general Hilbert spaces do play a role in this article, our main interest is matrix convex sets which are restricted to finite dimensions. Thus it is convenient so also have notation for matrix spaces. We let $\M{g}{m}{n}$ denote the set of $g$-tuples of $m\times n$ matrices with entries in $\F$, and let $M_{n}(\F)^g=M_{n\times n}(\F)^g$. Additionally, $\SMF{g}{n}$ is the set of all $g$-tuples of real self-adjoint (symmetric) $n\times n$ matrices. Additionally we set $SM(\F)^g = \cup_n\SMF{g}{n}$. Given a subset $K \subset SM(\F)^g$, we let $K(n)$ denote the set 
\[
K(n) := K \cap SM_{n} (\F)^g.
\]
That is $K(n)$ is set of $g$-tuples of $n \times n$ matrices that are elements of $K$. The set $K(n)$ is called the \df{$n$th level of $K$}.

 For clarity we emphasize that, even when we restrict to real settings, we use the term self-adjoint rather than symmetric to provide consistency in the terminology throughout the article. Similarly, to be consistent with this terminology choice, we use $B^*$ rather than $B^T$ to denote the transpose of a matrix $B$.

\subsection{Linear operator inequalities}

Given a $g$-tuple 
\[
A=(A_1,\dots,A_g) \in SA(\cH)^g
\]
of self-adjoint operators on $\cH$, a \df{monic linear pencil} is a sum of the form
\[
\LA(x) = I_\cH - A_1 x_1 - A_2 x_2 - \dots - A_g x_g.
\]
Given a tuple $X \in \SMF{g}{n}$, the \df{evaluation} of $\LA$ at $X$ is 
\[
\LA(X) = I_{\cH} \otimes I_n - A_1 \otimes X_1 - A_2 \otimes X_2 - \dots - A_g \otimes X_g
\]
where $\otimes$ denotes the Kronecker tensor product. Additionally, let $\lA(X)$ denote the homogeneous linear part of $\LA(X)$. That is,
\[
\lA(X) = A_1 \otimes X_1 + A_2 \otimes X_2 + \dots + A_g \otimes X_g,
\]

The inequality
\[
\LA(X) \psd 0,
\]
is called a \df{linear operator inequality}. If $\cH$ is finite dimension, then the inequality $\LA(X) \psd 0$ is called a \df{linear matrix inequality}. 

\subsection{Free spectrahedra}
\label{sec:gendefs}
Given a $g$-tuple $A\in SA(\cH)^g$ and a positive integer $n$ define the set $\cD_A(n) \subseteq  \SM{g}{n}$ by
\[
\D_A(n) := \{X\in \SMF{g}{n} : \LA(X) \psd 0\}.
\]
That is, $\cD_A(n)$ is the set of all $g$-tuples of $n\times n$ real symmetric matrices $X$ such that the evaluation $\LA(X)$ is positive semidefinite. Additionally define the set $\cD_A \subset \SM{g}{}$ to be the union over all $n$ of the the sets $\cD_A(n)$. That is,
\[
\D_A := \bigcup_{n=1}^{\infty}\D_A(n).
\]

If $\cH$ is finite dimensional, then $\cD_A$ is called a \df{free spectrahedron} and $\cD_A(n)$ is the \df{free spectrahedron $\cD_A$ at level $n$} \cite{EHKM18}. In this article, we extend to considering the case where $\cH$ is a separable infinite dimensional Hilbert space  and $A \in SA(\cH)^g$ is a compact operator tuple. In this case we call $\cD_A$ a \df{\gfs} and we call $\cD_A(n)$ a \df{\gfs at level $n$}. That is, a generalized free spectrahedron $\cD_A$ is the solution set of the linear operator inequality
\[
L_A (X) \succeq 0,
\]
where $A$ is a tuple of self-adjoint compact operators on $\cH$.

\subsection{Matrix Convex Sets}
\newcommand{\set}[1]{\left\{#1\right\}}

Given a finite collection of $g$-tuples $\set{X^i}_{i=1}^\ell$ with $X^i \in \SMF{g}{n_i}$ for each $i = 1,2,\dots,\ell$, a \df{matrix convex combination} of $\set{X^i}_{i=1}^\ell$ is a sum of the form
\[
\sum_{i=1}^{\ell} V_i^* X^i V_i \qquad \text{with} \qquad \sum_{i=1}^{\ell} V_i^* V_i = I_n
\] 
where $V_i \in \M{}{n_i}{n}$ and 
\[
V_i^* X^i V_i = \left(V_i^* X_1^i V_i, V_i^* X_2^i V_i,\dots,V_i^* X_g^i V_i \right) \in \SM{g}{n}
\] 
for all $i = 1,2,\dots,\ell$. A key feature of matrix convex combinations is that the tuples $X^i$ need not be the same size.

A set $K \subseteq SM(\F)^g$ is \df{matrix convex} if it is closed under matrix convex combinations and the \df{matrix convex hull} of $K$, denoted $\comat (K)$, is the set of all matrix convex combinations of the elements of $K$. Equivalently, $K$ is matrix convex if and only if $K = \comat(K)$. It is straightforward to show that generalized free spectrahedra are matrix convex.

Say a matrix convex set $K$ is \df{bounded} if there is some real number $C$ so that
\[
CI_n - \sum_{i=1}^gX_i^2 \psd 0
\]
for all $X = (X_1, X_2,\dots,X_g) \in K(n)$ and all positive integers $n$. It is not difficult to show that $K$ is bounded if and only if $K(1)$ is bounded. Additionally, say $K$ is \df{closed} if $K(n)$ is closed for all $n$. We note that the generalized free spectrahedra considered in this article are, by definition, closed in this sense. Finally, say a matrix convex set $K$ is closed under complex conjugation if $X= (X_1,\dots,X_g) \in K$ implies $\overline{X} = (\overline{X}_1,\dots,\overline{X}_g) \in K$.

\subsection{Free extreme points of matrix convex sets}
As previously mentioned, matrix convex sets have many different types of extreme points. In this article, we restrict our attention to free extreme points. Given a set $K \subseteq \SMF{g}{}$, we say a point $X \in K(n)$ is a \df{free extreme point} of $K$ if whenever $X$ is written as a matrix convex combination 
\[
X = \sum_{i=1}^\ell V_i^* X^i V_i\qquad \text{with} \quad \sum_{i=1}^\ell V_i^* V_i = I_n,
\]
of points $X^i \in K$ with $V_i \neq 0$ for each $i$, then for all $i = 1,2,\dots,\ell$ either $V_i \in M_{n}(\F)$ and $X$ is unitarily equivalent to $X^i$ or $V_i \in \M{}{n_i}{n}$ where $n_i > n$ and there exists $Z^i\in K$ such that $X \oplus Z^i$ is unitarily equivalent to $X^i$. Intuitively, a tuple $X$ is a free extreme point of $K$ if it cannot be written as a nontrivial matrix convex combination of any collection of elements of $K$. We let $\free{K}$ denote the set of all free extreme points of $K$.

\subsection{Free extreme points, dilations, and the Arveson boundary}

We next discuss the connection between free extreme points and the dilation theoretic Arveson boundary. To do this, we must first introduce the notion of an irreducible tuple of matrices. 

Given a matrix $M \in M_{n}(\F)$, a subspace $N \subseteq \F^n$ is a \df{reducing subspace} if both $N$ and $N^\perp$ are invariant subspaces of $M$, A tuple $X\in \SM{g}{n}$ is \df{irreducible} (over $\F$) if the matrices $X_1, \dots, X_g$ have no common reducing subspaces in $\F^n$; a tuple is \df{reducible} (over $\F$) if it is not irreducible. For the remainder of the article, we drop the distinction ``over $\F$'' when referring to irreducible tuples. However, we warn the reader that a real tuple may be reducible over $\C$ even if it is irreducible over $\R$. Thus irreducibility over $\R$ is not equivalent to other well-known definitions of irreducibility which are instead equivalent to irreducibility over $\C$.

\subsubsection{Dilations}

Let $K \subset \smfg$ be a matrix convex set and let $X \in K (n)$. If there exists a positive integer $\ell \in \N$ and $g$-tuples $\beta \in M_{n \times \ell} (\F)^g$ and $\gamma \in \smlfg$ such that
\[
Y=\begin{pmatrix}
X & \beta \\
\beta^* & \gamma
\end{pmatrix}= \left(\begin{pmatrix}
X_1 & \beta_1 \\
\beta^*_1 & \gamma_1
\end{pmatrix}, \cdots,
\begin{pmatrix}
X_g & \beta_g \\
\beta^*_g & \gamma_g 
\end{pmatrix} \right) \in K,
\]
then we say \df{$Y$ is an $\ell$-dilation of $X$}. The tuple $Y$ is a \df{trivial dilation} of $X$ if $\beta=0$. A key connection between matrix convex combinations and dilations is the following. If $Y$ is a dilation of $X$ and $V^*=\begin{pmatrix} I_n & 0 \end{pmatrix},$ then $X=V^* Y V$ with $V^* V=I_n$. That is, if $Y$ is a dilation of $X$, then $X$ can be expressed as a matrix convex combination of $Y$. We note that this matrix convex combination is non-trivial if the dilation itself is non-trivial.

Given a matrix convex set $K$ and an element $X \in K$, we define the \df{dilation subspace of $K$ at $X$}, denoted $\disb{K}{X}$, to be
\[
\disb{K}{X}=\spn\left( \left\{ \beta \in M_{n \times 1}(\F)^g : \mathrm{there \ exists \ a \ } \gamma \in \R^g \mathrm{\ s.t. \ } \begin{pmatrix}
X & \beta \\
\beta^* & \gamma
\end{pmatrix} \in K\right\}\right).
\index{$\disb{K}{X}$}
\]
The dilation subspace obtains its name as, up to scaling, it gives the space of viable dilations of an element of a matrix convex set. 

\begin{lemma}
\label{lem:DilatingBetaAreSubspace}
Let $K \subset \SMF{g}{}$ be a matrix convex set and let $X \in K$. If $\beta \in \disb{K}{X}$, then there exists a real constant $c > 0$ and a tuple $\gamma \in \R^g$ such that
\[
\begin{pmatrix}
X & c\beta \\
c \beta^* & \gamma
\end{pmatrix} \in K.
\]
\end{lemma}
\begin{proof}
If $\beta = 0 \in M_{n \times 1} (\F)^g$, the proof is trivial, so assume $\beta \neq 0$. From the definition of the dilation subspace, the tuple $\beta$ can be expressed as a linear combination $\beta= \sum_{i=1}^k \alpha_i \beta^{i}$ of tuples $\beta^i \in M_{n\times 1}(\F)^g$ where for each $i$, there exists a $\gamma^{i} \in \R^g$ such that
\[
\begin{pmatrix}
X & \beta^i \\
(\beta^i)^* & \gamma^i
\end{pmatrix} \in K.
\]
We first argue that the $\alpha_i$ can be taken to be positive real numbers. To this end, for each $i$ write $\alpha_i =|\alpha_i|e^{i \theta}$ in polar coordinates and observe that 
\[
\begin{pmatrix}
1 & 0 \\
0 & e^{-i\theta}
\end{pmatrix}
\begin{pmatrix}
X & \beta^i \\
(\beta^i)^* & \gamma^i
\end{pmatrix} 
\begin{pmatrix}
1 & 0 \\
0 & e^{i\theta}
\end{pmatrix}
= \begin{pmatrix}
X & e^{i\theta}\beta^i \\
(e^{i\theta}\beta^i)^* & \gamma^i
\end{pmatrix} \in K.
\]
Setting $\tilde{\beta}^i = e^{i\theta}\beta^i$, we obtain $\beta = \sum_{i=1}^k |\alpha_i| \tilde{\beta}^{i}$. Therefore, we can without loss of generality assume $\alpha_i > 0$ as claimed. 

Now, define $c \in \R$ by $1/c = \sum_{i=1}^k \alpha_i$. Then $\sum_{i=1}^k c\alpha_i = 1$, so the convex combination
\[
\sum_{i=1}^k (c\alpha_i) \begin{pmatrix}
X & \beta^i \\
(\beta^i)^* & \gamma^i
\end{pmatrix} = \begin{pmatrix}
X & \sum_{i=1}^k c\alpha_i \beta^i\\
\sum_{i=1}^k c\alpha_i (\beta^i)^* & \sum_{i=1}^k c\alpha_i \gamma^i
\end{pmatrix} =
\begin{pmatrix}
X & c\beta\\
c\beta^* & \sum_{i=1}^k c\alpha_i \gamma^i
\end{pmatrix} .
\]
is an element of $K$, which completes the proof. 
\end{proof}

In the case that $K$ is a free spectrahedron, the definition of the dilation subspace given here is equivalent to the definition given in \cite{EH19}, though the presentation is different, see \cite[Lemma 2.1]{EH19}. The present definition has the advantage that it can be used for general matrix convex sets. See Lemma \ref{lem:DiSubPropertiesCompact} for a more detailed discussion of the dilation subspace of  generalized free spectrahedra. 

\subsubsection{Arveson extreme points}
A tuple $X \in K$ is an \df{Arveson extreme point} of $K$ if $K$ does not contain a nontrivial dilation of $X$. More precisely, $X \in K$ is an Arveson extreme point of $K$ if and only if, if
\beq
\label{eq:ArvDefEq}
\begin{pmatrix}
X & \beta \\
\beta^* & \gamma 
\end{pmatrix} \in K(n+\ell),
\eeq
for some tuples $\beta \in M_{n \times \ell} (\R)^g$ and $\gamma \in \SM{g}{\ell}$, then $\beta=0$. Equivalently, $X$ is an Arveson extreme point of $K$ if and only if $\dim \disb{K}{X} = 0$. If $Y$ is an Arveson extreme point of $K$ and $Y$ is an ($\ell$-)dilation of $X \in K$, then we will say $Y$ is an \df{Arveson ($\ell$-)dilation} of $X$.

The following theorem gives  the connection between free and Arveson extreme points. 
\begin{theorem}[{\cite[Theorem 1.1 (3)]{EHKM18}}]
\label{theorem:EHKMRealComplex} Let $K \subset \SMF{g}{}$ be a matrix convex set. Then $X$ is a free extreme point of $K$ if and only if $X$ is an Arveson extreme point of $K$ and is irreducible. 
\end{theorem}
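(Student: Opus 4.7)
The plan is to establish both implications separately, with the structural role of irreducibility handled by a Schur-type argument applied to intertwiners of matrix convex combinations. For the easy half of the forward direction, I would show that a free extreme $X$ must be irreducible: a nontrivial unitary decomposition $X \sim X' \oplus X''$ with $X', X'' \in K$ (each obtainable as a block-isometric compression of $X$, and hence in $K$ by matrix convexity) produces the matrix convex combination $X = V_1^* X' V_1 + V_2^* X'' V_2$ with $V_1, V_2 \neq 0$ and sizes $n_1, n_2 < n$; since this shape is not permitted by either clause of the free extreme definition (which requires $n_i = n$ or $n_i > n$), $X$ cannot be free extreme.

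For the backward direction, suppose $X$ is both irreducible and Arveson extreme and let $X = \sum_{i=1}^\ell V_i^* X^i V_i$ be a matrix convex combination with every $V_i \neq 0$. Setting $Y = X^1 \oplus \cdots \oplus X^\ell \in K$ and $V = \mathrm{col}(V_1, \ldots, V_\ell)$, I get $V^* V = I_n$ and $V^* Y V = X$. Completing $V$ to a unitary $U = [V\ W]$ presents $X$ as the top-left block of the dilation $U^* Y U \in K$, so Arveson extremality forces the off-diagonal $V^* Y W$ to vanish. The resulting intertwining identity $Y V = V X$ reads componentwise as $X^i V_i = V_i X$ for each $i$. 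A Schur-type argument then finishes the job: the matrix $V_i^* V_i$ commutes with $X$ and is symmetric positive semidefinite, so by the real version of Schur's lemma applied to the irreducible tuple $X$ (symmetric elements of the real commutant $\R$, $\C$, or $\H$ are always scalar), $V_i^* V_i = c_i I_n$ for some $c_i > 0$; the rescaled isometry $V_i/\sqrt{c_i}$ identifies $X$ with a reducing subspace of $X^i$, producing either $X \sim X^i$ (when $n_i = n$) or $X \oplus Z^i \sim X^i$ with $Z^i \in K$ (when $n_i > n$), which is exactly the free extreme condition.

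For the remaining half of the forward direction (free extreme implies Arveson extreme), I would apply the same intertwining-plus-Schur package to the one-term matrix convex combination $X = V^* Y V$, with $V = (I_n\ 0)^*$, coming from a hypothetical nontrivial dilation $Y = \bigl(\begin{smallmatrix} X & \beta \\ \beta^* & \gamma \end{smallmatrix}\bigr)$. Free extremality forces $Y \sim X \oplus Z$ via some unitary $U = [U_1\ U_2]$ with $Y U_1 = U_1 X$, and decomposing $V = U_1 T + U_2 S$ along $U$ produces a secondary matrix convex combination $X = T^* X T + S^* Z S$ whose shape and irreducibility constraints should ultimately force $\beta = 0$. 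The main obstacle is this last deduction, because $Y \sim X \oplus Z$ alone does not imply $\beta = 0$ in general (reducible examples like $X = I_2$ admit nontrivial dilations that are unitarily equivalent to $X \oplus Z$), so irreducibility of $X$ must intervene to compare the two presentations of $X$ coming from the compression $V$ and from the intertwiner $U_1$; I expect this step to hinge on the Schur rigidity of intertwiners from an irreducible tuple, possibly combined with an iterative compression argument in the dilation direction that reduces the dilation dimension until the shape-forbidden regime $n_i < n$ is reached.
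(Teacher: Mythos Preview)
The paper does not give its own proof of this statement; it is quoted verbatim from \cite[Theorem~1.1(3)]{EHKM18} and used as a black box. So there is nothing in the paper to compare your argument against, and your outline is in fact the standard route taken in that reference.

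Your backward direction (irreducible and Arveson extreme implies free extreme) and the ``free extreme implies irreducible'' half are both correctly sketched; the intertwining $YV=VX$ plus the fact that the symmetric elements of the real commutant of an irreducible tuple are scalar is exactly the right mechanism.

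The only genuine gap is the last step of the forward direction, where you stop at ``I expect this step to hinge on\dots''. The missing observation is short, and no iteration is needed. First reduce to $1$-dilations: a nontrivial $\ell$-dilation compresses (via $\left(\begin{smallmatrix} I_n & 0\\ 0 & e_j\end{smallmatrix}\right)$ for a suitable $e_j$) to a nontrivial $1$-dilation, so you may take $Z\in K(1)$ in your secondary combination $X=T^*XT+S^*ZS$. If $n\ge 2$ and $S\neq 0$, the summand $S^*ZS$ violates free extremality outright, since $Z$ has size $1<n$ and neither clause of the definition applies; hence $S=0$, $T$ is orthogonal, $\operatorname{ran}V=\operatorname{ran}U_1$ is a reducing subspace for $Y$, and therefore $\beta=0$. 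If $n=1$, free extremality applied to the $Z$-term forces $Z=X$, so $Y$ is unitarily equivalent to $X\oplus X=X\cdot I_2$, and again $\beta=0$. Your worry about the reducible counterexample $X=I_2$ is handled precisely because you have already shown $X$ is irreducible, which is what makes the size constraint in the definition bite.
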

This theorem has the consequence that the irreducible components of an Arveson extreme point are all free extreme points. This means that expressing an element $X$ of a matrix convex set as a matrix convex combination of free extreme points can be accomplished by finding an Arveson dilation of $X$.  We note that the original proof given in \cite{EHKM18} is done over the complex numbers. The proof over the reals is given in \cite{EH19}.

\subsection{Free extreme points of generalized free spectrahedra}

We are now in position to give a formal statement of our spanning result for generalized free spectrahedra. 

\begin{theorem}
\label{theorem:AbsSpanGeneralized}
Let $\cD_A \subset \SMF{g}{}$ be a bounded generalized free spectrahedron and assume $\cD_A$ is closed under complex conjugation. Given a tuple $X \in \cD_A(n)$, there exists an integer $\ell$ that satisfies
\beq
\label{eq:CarathDimBound}
\ell \leq \dim \disb{\cD_A}{X} \leq ng \qquad \mathrm{if} \ \F=\R,
\eeq
and
\beq
\label{eq:CarathDimBoundComplex}
\ell \leq n+2 \dim\disb{\cD_A}{X} \leq n+2ng \qquad \mathrm{if} \ \F=\C,
\eeq
and a $\ell$-dilation $Y \in \cD_A(n+\ell)$ of $X$ such that $Y$ is an Arveson extreme point of $\cD_A$.

As an consequence, $X$ can be written as a matrix convex combination
\[
X = \sum_{i=1}^k V_i^* F^i V_i  \qquad \mathrm{s.t.} \qquad \sum_{i=1}^k V_i^* V_i = I,
\]
of free extreme points $F^i \in \cD_A(n_i)$ of $\cD_A$ where $\sum_{i=1}^k n_i  \leq n+\ell$. Thus, $\cD_A$ is the matrix convex hull of its free extreme points.
\end{theorem}
\begin{proof}

The proof over $\R$ of the first part of Theorem \ref{theorem:AbsSpanGeneralized} quickly follows from Theorem  \ref{theorem:1DiaReduceKerConDimCompact}. In particular, let $X \in \cD_A$, and set $\ell = \dim \disb{\cD_A}{X}$. Assume that $X$ is not an Arveson extreme point of $\cD_A$, i.e., that $\ell > 0$. Also note that $\ell \leq ng$ since $\disb{\cD_A}{X}$ is a subspace of $M_{n \times 1}(\R)^g$. 

Using Theorem \ref{theorem:1DiaReduceKerConDimCompact} shows that there exists an integer $k \leq \ell$ and a collection of tuples $\{Y^i\}_{i=0}^\ell \subset \cD_A$ such that the following hold:
\begin{enumerate}
\item $Y^0 = X$.
\item For each $i=1,\dots,\ell-1$, the tuple $Y^{i+1}$ is a $1$-dilation of $Y^{i}$ and
\[
\dim \disb{\cD_A}{Y^{i+1}} < \dim \disb{\cD_A}{Y^i}.
\]
\item $Y^\ell$ satisfies  $\dim \disb{\cD_A}{Y^{k}} = 0$.
\end{enumerate}
It follows from Lemma \ref{lem:DiSubPropertiesCompact} \eqref{it:XArvIFFDiSubIsZeroCompact} that $Y$ is an Arveson extreme point of $\cD_A$. Thus $Y^\ell \in \cD_A(n+\ell)$ is an Arveson $\ell$-dilation of $X$. 

The proof over $\R$ of the second part of the theorem quickly follows the first part together with routine dilation theoretic arguments. In particular, one can use the same argument as is used to prove the corresponding statement in \cite[Theorem 1.3]{EH19}.

The proof of the theorem over $\C$ follows by combining the upcoming Theorem \ref{theorem:ComplexReductionToReals} which reduces free extreme points spanning over the complexes to free extreme points spanning over the reals together with Lemma \ref{lem:RealPartOfGenSpec} and Lemma \ref{lem:DilationSubspaceIdentities} \eqref{it:RealEmbedDiSub}. More precisely, Lemma \ref{lem:RealPartOfGenSpec} shows that the real part of a complex generalized free spectrahedron is a indeed a real generalized free spectrahedron while Lemma \ref{lem:DilationSubspaceIdentities} \eqref{it:RealEmbedDiSub} gives the dimension bound of equation \eqref{eq:CarathDimBoundComplex}. \end{proof}

\subsection{Free extreme points of matrix convex sets closed under complex conjugation}

Our second main result shows that, for matrix convex sets that our closed under complex conjugation, free extreme points spanning over the complexes reduces to free extreme points spanning over the reals.

\begin{theorem}
\label{theorem:ComplexReductionToReals}
Let $K \subset SM(\C)^g$ be a matrix convex set such that $K = \overline{K}$, and let $K^\R$ denote $K \cap SM(\R)^g$. Then $K$ is the matrix convex hull of its free extreme points if and only if $K^\R$ is the matrix convex hull of its free extreme points. In notation,
\[
K = \comat(\free(K)) \qquad \mathrm{if \ and \ only \ if} \qquad K^\R = \comat(\free(K^\R)).
\]
Furthermore, given $X = X_\R + i X_\C \in K$ where $X_\R$ and $X_\C$ are the real and imaginary parts of $X$, if the tuple
\[
\begin{pmatrix} X_\R & X_\C \\
X_\C^T & X_\R
\end{pmatrix} \in K^\R,
\]
can be expressed as a matrix convex combination of free extreme points of $K^\R$ whose sum of sizes is at most $m$, then $X$ can also be expressed as a matrix convex combination of free extreme points of $\K$ whose sum of sizes is at most $m$. 
\end{theorem}
\begin{proof}
The main idea in the proof is to show that the Arveson extreme points of $K^\R$ are also Arveson extreme points of $K$. Then, since $X$ is closed under complex conjugation, given $X \in K$, one has that 
\[
\begin{pmatrix} X_\R & X_\C \\
X_\C^T & X_\R
\end{pmatrix} \in K^\R \subset K,
\]
and, moreover, $X$ is a matrix convex combination of this tuple. It follows that if this tuple dilates to an Arveson extreme point $Y \in K^\R$, then $X$ is a matrix convex combination of $Y$ and that $Y$ is an Arveson extreme point of $K$. Details are given in Section \ref{sec:ProofOfReductionToReals}.
\end{proof}

\subsection{Limits of spanning theorems based on maximal 1-dilations}

It is natural to wonder what are the limits of spanning theorems such as Theorem \ref{theorem:AbsSpanGeneralized} may be. We first note that every closed matrix convex set $K \subset \SMF{g}{}$ that has $0$ as an interior point can in fact be expressed as the positivity domain of some operator valued linear pencil $L_A (X)$ where $A \in SA(\cH)^g$ for some appropriate Hilbert space $\cH$. This can be seen by considering the polar dual of $K$. The polar dual of $K$, denoted $K^\circ$, is the set
\[
K^\circ = \{Y \in SM(\F)^g : L_X (Y) \succeq 0 \mathrm{\ for \ all \ } X \in K\}. 
\]
From \cite[Proposition 4.3 (6)]{HKM17}, if $K$ is a closed bounded matrix convex containing $0$, then $K = (K^{\circ})^\circ$, i.e.,
\[
K = \{X \in SM(\F)^g : L_Y (X) \succeq 0 \mathrm{\ for \ all \ } Y \in K^\circ \}.
\]
Moreover, if $0$ is an interior point of $K$, then \cite[Proposition 4.3 (2)]{HKM17} shows that $K^\circ$ is bounded. Setting $A = \oplus_{Y \in K^\circ} Y$ and taking $\cH$ to be the corresponding Hilbert space, one has that $A \in SA(\cH)^g$ and that $K = \{X \in SM(\F)^g: L_A (X) \succeq 0\}$. 

The main role of played by compactness in our proof is to guarantee the existence of spectral gap at zero for certain operators built from the pencil $L_A$, including $L_A(X)$ for tuples $X \in K$. However, our proof easily extends to any matrix convex set $K$ for which the each of the appropriate operators has a spectral gap at zero. This leads to the following question.

\begin{question}
Let $K$ be a closed bounded matrix convex set that contains $0$ as an interior point and set $A = \oplus_{Y \in K^\circ} Y.$ What conditions on $K$ guarantee that $L_A(X)$ has a spectral gap at $0$ for all $X \in K$?
\end{question}

An example of a class of matrix convex sets for which our current approach does not succeed is coordinate projections of free spectrahedra, i.e., free spectrahedrops \cite{HKM16,HKM17}. If this approach were to succeed, then one could always guarantee that an element at level $n$ of a real free spectrahedrop could be expressed as a matrix convex of free extreme points whose sum of sizes is at most $n+ng$. However, there are counter examples that show this is not possible. In particular, let $A \in SM_d (\R)^g$ be an irreducible tuple with $d > g+1$ such that $\cD_A$ is a bounded free spectrahedron. As a consequence of \cite[Theorem 4.11]{HKM17}, if $K$ is the matrix convex hull of $A$, then $K$ is a bounded free spectrahedrop. However, every free extreme point of $K$ is unitary equivalent to $A$, hence lies in $K(d)$. It follows that no element of $K(1)$ can be expressed as a matrix convex combination of free extreme points of $K$ whose sum of sizes is $g+1$ since $g+1<d$. 

Currently it is not known if closed bounded free spectrahedrops are the matrix convex hull of their free extreme points. However, there are several known examples of matrix convex sets that are not the matrix convex hull of their free extreme points. For example, \cite[Example 6.30]{K+} illustrates how one can arrive at a matrix convex set without free extreme points by considering generators of the Cuntz algebra. In fact, this matrix convex set is a free spectrahedron over $\C$ that is not closed under complex conjugation \cite[Remark 2.5]{Pas22}. This illustrates that the assumption that $K$ is closed under complex conjugation in Theorem \ref{theorem:AbsSpanGeneralized} is necessary. Examples of real compact matrix convex sets that have no free extreme points are also known and are discussed in \cite{Evert17}. These examples are essentially obtained by taking the matrix convex hull of an irreducible tuple of compact operators on a real Hilbert space.

\section{Arveson extreme points and generalized free spectrahedra}
\label{sec:GenProofs}

This section introduces maximal $1$-dilations for real generalized free spectrahedra and presents Theorem \ref{theorem:1DiaReduceKerConDimCompact} which shows that maximal $1$-dilations in a real generalized free spectrahedron reduce the dimension of the dilation subspace. Throughout the section $\F=\R$ and $\cH = \ell^2 (\N)$ is a real Hilbert space and $A \in SA(\cH)^g$ is a tuple of compact self-adjoint operators on $\cH$. A simple observation is that with these assumptions, if $X \in \cD_A$ then $L_A (X)$ is diagonalizable and has a smallest nonzero eigenvalue.

\begin{lem}
\label{lem:CompactLOI}
Let $A$ be a $g$-tuple of bounded self-adjoint compact operators on $\cH$ and let $X \in SM_n (\R)^g$ be a $g$-tuple of self-adjoint $n \times n$ matrices. Then 
\[
\Lambda_A (X):=A_1 \otimes X_1 + \dots + A_g \otimes X_g,
\]
is a compact operator self-adjoint on $\cH \otimes \R^n$
\end{lem}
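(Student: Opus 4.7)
The plan is to verify that each summand $A_i \otimes X_i$ is both compact and self-adjoint on $\cH \otimes \R^n$, and then to invoke the fact that the space of compact self-adjoint operators is closed under finite sums. Since there are only finitely many operations to track, there is no serious obstacle here; the lemma is essentially a bookkeeping statement that records standard facts about the Kronecker/tensor product of operators in the form needed for the rest of Section \ref{sec:GenProofs}.

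First I would handle self-adjointness. Because $X_i \in SM_n(\R)$ is a real symmetric matrix, it is a self-adjoint operator on $\R^n$; combined with the self-adjointness of $A_i$, one gets $(A_i \otimes X_i)^* = A_i^* \otimes X_i^* = A_i \otimes X_i$. Summing over $i = 1, \dots, g$ preserves self-adjointness, so $\Lambda_A(X)$ is self-adjoint on $\cH \otimes \R^n$.

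Next I would address compactness, which is the only step that uses anything nontrivial about $A$. Since $X_i$ acts on the finite-dimensional space $\R^n$, it is trivially a finite-rank (hence compact) operator. The main fact I would invoke is that the tensor product of two compact operators is compact; this can be seen by approximating $A_i$ in operator norm by finite-rank operators $A_i^{(k)}$ (using the spectral theorem for compact self-adjoint operators, for example) and observing that $A_i^{(k)} \otimes X_i$ is finite-rank, while
\[
\|A_i \otimes X_i - A_i^{(k)} \otimes X_i\|_{\mathrm{op}} \;\leq\; \|A_i - A_i^{(k)}\|_{\mathrm{op}} \, \|X_i\|_{\mathrm{op}} \;\longrightarrow\; 0
\]
as $k \to \infty$. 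Thus each $A_i \otimes X_i$ is a norm-limit of finite-rank operators on $\cH \otimes \R^n$ and is therefore compact.

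Finally, I would conclude by noting that the compact operators on any Hilbert space form a norm-closed two-sided ideal in $B(\cH \otimes \R^n)$, and in particular are closed under finite sums. Since each $A_i \otimes X_i$ is compact and self-adjoint, the finite sum $\Lambda_A(X) = \sum_{i=1}^g A_i \otimes X_i$ is also compact and self-adjoint, which is the claim. The hardest part, such as it is, would just be choosing a clean way to articulate the tensor-product-of-compacts fact; everything else is purely formal.
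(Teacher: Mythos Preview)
Your argument is correct and is exactly the kind of routine verification the paper has in mind: the paper's own proof consists of the single word ``Straightforward.'' You have simply unpacked that word into its natural constituent steps (self-adjointness of tensor products of self-adjoints, compactness via finite-rank approximation, and closure of the compacts under finite sums), so there is nothing to compare.
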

\begin{proof}
Straightforward.
\end{proof}

\begin{lem}
\label{lem:smallestEig}
Let $Q \in B(\cH)$ be a compact self-adjoint operator. Then $I-Q$ is diagonalizable and has a smallest nonzero eigenvalue. 
\end{lem}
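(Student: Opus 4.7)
The plan is to apply the spectral theorem for compact self-adjoint operators to $Q$. Since $Q$ is compact and self-adjoint on $\cH$, its spectrum consists of real eigenvalues with finite-dimensional eigenspaces, possibly together with $0$, and any infinite set of nonzero eigenvalues must converge to $0$. Equivalently, $0$ is the only possible accumulation point of $\sigma(Q)$, and every other element of $\sigma(Q)$ is an isolated eigenvalue with finite-dimensional eigenspace.

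I would then translate this spectral description to $I - Q$ via the spectral mapping $\lambda \mapsto 1 - \lambda$, so $\sigma(I - Q) = \{1 - \lambda : \lambda \in \sigma(Q)\}$. The only possible accumulation point of this set is $1$. In particular, $0$ is not an accumulation point of $\sigma(I - Q)$: either $0 \notin \sigma(I - Q)$, or $0 \in \sigma(I - Q)$ as an isolated point (which occurs precisely when $1$ is an eigenvalue of $Q$). In either case there exists some $\delta > 0$ such that $\sigma(I - Q) \cap (-\delta, \delta) \subseteq \{0\}$.

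To conclude that a smallest nonzero eigenvalue exists, let $c$ be the infimum of $|\mu|$ over nonzero eigenvalues $\mu$ of $I - Q$. By the previous paragraph $c \geq \delta > 0$. Choose a sequence $(\mu_n)$ of nonzero eigenvalues with $|\mu_n| \to c$ and write $\mu_n = 1 - \lambda_n$ for eigenvalues $\lambda_n$ of $Q$. Then each $\lambda_n$ lies in a fixed compact interval around $1$ that is bounded away from $0$; since $0$ is the sole accumulation point of $\sigma(Q)$, only finitely many eigenvalues of $Q$ lie in this interval, so $\{\mu_n\}$ is finite. Hence the infimum $c$ is attained by some nonzero eigenvalue of $I - Q$, which is the desired smallest nonzero eigenvalue in absolute value.

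There is no serious obstacle — the argument is essentially immediate from the spectral theorem for compact self-adjoint operators, with only the mild bookkeeping of whether $1$ lies in $\sigma(Q)$ (equivalently whether $0 \in \sigma(I-Q)$). The degenerate case in which $I - Q$ has no nonzero eigenvalues at all can only occur when $Q = I$, which forces $\cH$ to be finite-dimensional, and can be dispensed with separately.
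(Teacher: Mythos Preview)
Your proposal is correct and follows essentially the same line as the paper: use the spectral theorem for compact self-adjoint operators to see that $Q$ is diagonalizable with $0$ as the only possible accumulation point of its spectrum, then translate by $1$ to conclude that $I-Q$ is diagonalizable with $1$ as the only possible accumulation point, whence a smallest (in modulus) nonzero eigenvalue exists. The paper's proof is a two-sentence sketch of exactly this; your additional bookkeeping (the infimum argument and the degenerate case $Q=I$) is sound but not strictly needed.
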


\begin{proof}
Since $Q$ is compact and self-adjoint, $Q$ is diagonalizable and only can have zero as a limit point of its spectrum. It follows that 
$I - Q$
is diagonalizable and can only have one as a limit point of its spectrum. Therefore, $I-Q$  has a smallest nonzero eigenvalue. 
\end{proof}

\begin{Cor}
\label{cor:smallestEig}
Let $A$ be a $g$-tuple of compact self-adjoint operators on $\cH$ such that  $\cD_A$ is a generalized free spectrahedron and let $X \in SM_n (\R)^g$ be any $g$-tuple of self-adjoint $n \times n$ matrices. Then $L_A (X) = I-\Lambda_A(X)$ is diagonalizable and has a smallest nonzero eigenvalue. 

As an immediate consequence $L_A (X)^\dagger$ is a bounded self-adjoint operator on $\cH \otimes \R^n$, where $\dagger$ denotes the Moore-Penrose pseudoinverse. 
\end{Cor}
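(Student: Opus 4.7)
The plan is to combine the two preceding lemmas directly on the appropriate Hilbert space. First I would observe that $\cH \otimes \R^n$ is itself a real Hilbert space (the tensor with a finite dimensional factor causes no difficulty), so the hypotheses of Lemma \ref{lem:smallestEig} apply verbatim to compact self-adjoint operators on $\cH \otimes \R^n$. Next I would invoke Lemma \ref{lem:CompactLOI} to conclude that $Q := \Lambda_A(X)$ is a compact self-adjoint operator on $\cH \otimes \R^n$, and then write
\[
L_A(X) \;=\; I_{\cH \otimes \R^n} - \Lambda_A(X) \;=\; I - Q,
\]
so that Lemma \ref{lem:smallestEig} applies with this choice of $Q$ to yield the existence of a smallest nonzero eigenvalue of $L_A(X)$. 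This is the entire content of the first statement.

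For the consequence concerning $L_A(X)^\dagger$, I would use the spectral theorem for compact self-adjoint operators applied to $Q$: the operator $L_A(X) = I-Q$ is diagonalizable with spectrum consisting of $0$ (possibly, with finite or infinite multiplicity on $\ker L_A(X)$) together with a sequence of nonzero eigenvalues which, by the first part, is bounded away from $0$ by some $\lambda_{\min} > 0$. The Moore--Penrose pseudoinverse then acts as multiplication by $1/\lambda$ on each nonzero eigenspace and as $0$ on $\ker L_A(X)$, giving a self-adjoint operator with operator norm at most $1/\lambda_{\min}$.

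The main ``obstacle'' is essentially bookkeeping rather than a genuine difficulty: one just needs to confirm that the spectral gap furnished by Lemma \ref{lem:smallestEig} is precisely what is required to make the pseudoinverse bounded. Since the two preparatory lemmas have already packaged the substantive content, the corollary reduces to a one-line appeal to each, plus the standard spectral-calculus description of $L_A(X)^\dagger$.
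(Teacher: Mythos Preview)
Your proposal is correct and matches the paper's approach: the paper states this as an immediate corollary of Lemmas \ref{lem:CompactLOI} and \ref{lem:smallestEig} without further proof, and your argument is exactly the intended one-line combination of those two lemmas together with the standard spectral description of the pseudoinverse. One very minor remark: since $Q=\Lambda_A(X)$ is compact, the kernel of $L_A(X)=I-Q$ is in fact finite dimensional (it is the eigenspace of $Q$ for eigenvalue $1$), so the parenthetical ``possibly with infinite multiplicity'' can be dropped, though this has no bearing on the argument.
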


\subsection{The dilation subspace of generalized free spectrahedra}
\label{sec:DiSubpsaceCompact}

We now describe an alternative characterization of the dilation subspace for generalized free spectrahedra. To this end, define the subspace $\gensub{X}$ by
\[
\gensub{X}=\{\beta \in M_{n \times 1} (\R)^g | \ \ker L_A (X) \subset \ker \Lambda_A (\beta^*)\}.
\]
The following lemma shows that $\gensub{X}$ is equal to the dilation subspace of $\cD_A$ at $X$ and explains the connections between $\gensub{X}$ and dilations of $X$. 

\begin{lem}
\label{lem:DiSubPropertiesCompact}
Let $\cD_A \subset \SM{g}{}$ be a generalized free spectrahedron and let $X \in \cD_A (n)$. 
\begin{enumerate}
\item
\label{it:DiBetaIsInDiSubCompact} If $\beta \in M_{n \times 1} (\R)^g$ and 
\[
Y= \begin{pmatrix}
X & \beta \\
\beta^* & \gamma
\end{pmatrix} \in \cD_A (n+1),
\]
is a $1$-dilation of $X$, then $\beta \in \gensub{X}$. 
\item
\label{it:DiSubDilatesXCompact}
Let $\beta \in M_{n \times 1} (\R)^g$. Then $\beta \in \gensub{X}$ if and only if there is a tuple $\gamma \in \cD_A (1)$ real number $c_\gamma>0$ such that
\[
\begin{pmatrix}
X & c_\gamma \beta \\
c_\gamma \beta^* & \gamma
\end{pmatrix} \in \cD_A (n+1).
\]
In particular, one may take $\gamma=0 \in \R^g$.
\item 
\label{it:XArvIFFDiSubIsZeroCompact}
One has
\[
\disb{\cD_A}{X}=\gensub{X}.
\] 
As a consequence, $X$ is an Arveson extreme point of $\cD_A$ if and only if 
\[
\dim \gensub{X}=0.
\]
\end{enumerate}
\end{lem}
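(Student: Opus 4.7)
The plan is to adapt the proof of Lemma \ref{lem:DiSubProperties} to the compact-operator setting. The key simplification is that we no longer need to quantify over a family of matrices $W \in \cI_B(n+1)$; instead the single operator $L_A(X)$ does all the work, and Corollary \ref{cor:smallestEig} guarantees that $L_A(X)$ has a strictly positive smallest nonzero eigenvalue (so the Moore–Penrose pseudoinverse $L_A(X)^\dagger$ is a bounded self-adjoint operator). This spectral gap is what, in the free spectrahedrop case, was provided by the compactness of $\cI_B(n+1,r)$, so the compactness arguments in Lemma \ref{lem:DiSubProperties} disappear here.

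For Item \eqref{it:DiBetaIsInDiSubCompact}, I would apply a canonical shuffle (as in equation \eqref{eq:2x2CanShuf}) to rewrite $L_A(Y) \succeq 0$ as the $2\times 2$ block condition
\[
\begin{pmatrix} L_A(X) & \Lambda_A(\beta) \\ \Lambda_A(\beta^*) & L_A(\gamma) \end{pmatrix} \succeq 0,
\]
then take the Schur complement with respect to the $(1,1)$ block. The resulting positivity condition forces $\ker L_A(X) \subset \ker \Lambda_A(\beta^*)$, which is exactly the defining condition for $\beta \in \gensub{X}$.

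For Item \eqref{it:DiSubDilatesXCompact}, the reverse direction is just Item \eqref{it:DiBetaIsInDiSubCompact}. For the forward direction, suppose $\beta \in \gensub{X}$ and set $\gamma = 0 \in \R^g$, so that $L_A(\gamma) = I$. Another Schur complement calculation (as in equations \eqref{eq:LWAPosOnDi}--\eqref{eq:2x2SchurCompOnZero}) shows that
\[
L_A\!\begin{pmatrix} X & c\beta \\ c\beta^* & 0 \end{pmatrix} \succeq 0
\]
if and only if $L_A(X) - c^2 \Lambda_A(\beta)\Lambda_A(\beta^*) \succeq 0$. The kernel containment $\ker L_A(X) \subset \ker \Lambda_A(\beta) \Lambda_A(\beta^*)$ ensures that $\Lambda_A(\beta)\Lambda_A(\beta^*)$ vanishes on $\ker L_A(X)$, and by Corollary \ref{cor:smallestEig} the restriction of $L_A(X)$ to $(\ker L_A(X))^\perp$ is bounded below by a strictly positive constant. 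Choosing $c_\gamma > 0$ with $c_\gamma^2 \|\Lambda_A(\beta)\Lambda_A(\beta^*)\|$ smaller than this spectral gap yields the desired Schur inequality.

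Item \eqref{it:XArvIFFDiSubIsZeroCompact} is an immediate formal consequence: Item \eqref{it:DiBetaIsInDiSubCompact} gives $\disb{\cD_A}{X} \subset \gensub{X}$, while Item \eqref{it:DiSubDilatesXCompact} gives the reverse inclusion (the dilation it constructs has off-diagonal part $c_\gamma \beta$, and the dilation subspace is a span, so scaling is absorbed). The Arveson characterization then follows from the definition $\disb{\cD_A}{X} = 0 \iff X$ has no nontrivial dilation in $\cD_A$. The only step requiring any care is the Schur complement manipulation with pseudoinverses on an infinite dimensional Hilbert space, but this is clean because the spectral gap from Corollary \ref{cor:smallestEig} makes $L_A(X)^\dagger$ bounded, so all the usual finite-dimensional Schur complement identities transfer verbatim.
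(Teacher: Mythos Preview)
Your proposal is correct and follows essentially the same approach as the paper's proof: Schur complement via canonical shuffle for Item \eqref{it:DiBetaIsInDiSubCompact}, the spectral-gap argument from Corollary \ref{cor:smallestEig} with $\gamma=0$ for Item \eqref{it:DiSubDilatesXCompact}, and Item \eqref{it:XArvIFFDiSubIsZeroCompact} as a formal consequence. The only cosmetic difference is that the paper invokes boundedness of $L_A(\gamma)^\dagger$ (Schur complement with respect to the $(2,2)$ block) rather than $L_A(X)^\dagger$, but either choice yields the required kernel containment immediately.
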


\begin{proof}

We first prove Item \eqref{it:DiBetaIsInDiSubCompact}. Multiplying $L_A (Y)$ by permutations, sometimes called canonical shuffles, see \cite[Chapter 8]{P02}, shows 
\beq
\label{eq:2x2CanShufCompact}
L_A (Y) \succeq 0 \quad \mathrm{ if \ and \ only \ if } \quad \begin{pmatrix} L_A (X) & \Lambda_A (\beta) \\ \Lambda_A (\beta^*) & L_A (\gamma) \end{pmatrix} \succeq 0.
\eeq
It follows that if $L_A (Y) \succeq 0$, then for all $u \in \ker L_A(X)$ and $\alpha \in \R$, we have
\begin{align*}
0 &\leq \left\langle \begin{pmatrix} L_A (X) & \Lambda_A (\beta) \\ \Lambda_A (\beta^*) & L_A (\gamma) \end{pmatrix} \begin{pmatrix} u \\
\alpha \Lambda(\beta^*) u \end{pmatrix}, \begin{pmatrix} u \\
\alpha \Lambda(\beta^*) u \end{pmatrix}\right \rangle \\
& = 2\alpha \left\langle \Lambda(\beta^*) u, \Lambda(\beta^*)u \right\rangle + \alpha^2 \langle \lambda_A (\beta) L_A (\gamma) \Lambda (\beta)^* u, u \rangle.
\end{align*}
If $\Lambda(\beta^*)u \neq 0$, then the above would be quadratic polynomial with strictly positive discriminant, hence this expression could not be nonnegative for all $\alpha \in \R$. It follows that that $\Lambda(\beta^*)u$ and that $\ker L_A(X) \subseteq \ker \Lambda_A (\beta^*). $

We now prove Item \eqref{it:DiSubDilatesXCompact}. Note that $L_A (0)=I_\cH$, so using the Schur complement applied to equation \eqref{eq:2x2CanShufCompact} with $\gamma=0$ shows
\[
Y_0=\begin{pmatrix}
X & c\beta \\
c\beta^* & 0
\end{pmatrix} \in \cD_A (n+1)
\]
if and only if
\beq
\label{eq:2x2SchurCompCompactOnZero}
 L_A (X)-c^2 \Lambda_A (\beta) \Lambda_A (\beta^*) \succeq 0.
\eeq
Using Corollary \ref{cor:smallestEig} shows that $L_A (X)$ has a smallest nonzero eigenvalue, so we may pick $c$ small enough so that $\|c^2 \Lambda_A (\beta) \Lambda_A (\beta^*)\|_2$ is less than the smallest nonzero eigenvalue of $L_A (X)$. Furthermore, $\beta \in \gensub{X}$ implies $\ker L_A (X) \subset \ker \Lambda_A (\beta) \Lambda_A (\beta^*)$. Thus this  choice of $c$ guarantees that inequality \eqref{eq:2x2SchurCompCompactOnZero} holds, hence $Y_0 \in \cD_A (n+1)$. The reverse direction is a consequence of Item \eqref{it:DiBetaIsInDiSubCompact}. 

Item \eqref{it:XArvIFFDiSubIsZeroCompact} follows from Items \eqref{it:DiBetaIsInDiSubCompact} and \eqref{it:DiSubDilatesXCompact}.
\end{proof}

\begin{remark}
As in \cite{EH19}, the ability to take $\gamma=0 \in \R^g$ in Lemma \ref{lem:DiSubPropertiesCompact} \eqref{it:DiSubDilatesXCompact} helps simplify the NC LDL$^*$ calculations used in the upcoming proof of Theorem \ref{theorem:1DiaReduceKerConDimCompact}.
\end{remark}

\subsection{Maximal 1-dilations for generalized free spectrahedra.}

Before presenting our next lemma, we introduce some notation. Given a matrix convex set $K \subset \SM{g}{}$ and tuples $X \in \SM{g}{n}$ and $\beta \in M_{n \times 1} (\R)^g$, define
\beq
\label{eq:GammaSet}
\Gamma_{X,\beta}(K):=\left\{ \gamma \in  \R^g : \ \begin{pmatrix}
X & \beta \\
\beta^* & \gamma
\end{pmatrix} \in K \right\}.
\eeq

\begin{lem}
\label{lem:GammaSetCompactConvex}
Let $K \subset \SM{g}{}$ be a closed bounded matrix convex set. Fix $X \in \SM{g}{n}$ and $\beta \in M_{n \times 1} (\R)^g$. Then the set $\Gamma_{X,\beta} (K)$ is a closed bounded convex set. 
\end{lem}
\begin{proof}
If $\Gamma_{X,\beta} (K)$ is empty, then the result trivially holds. When $\Gamma_{X,\beta} (K)$ is nonempty, the fact that $\Gamma_{X,\beta} (K)$ is closed and bounded is immediate from the fact that $K$ is closed and bounded. 

We now show that $\Gamma_{X,\beta} (K)$ is convex. To this end, let $\{\gamma_1,\dots,\gamma_k\} \subset \Gamma_{X,\beta}(K)$ and let $\{c_1,\dots,c_k\}$ be nonnegative constants such that $\sum_{i=1}^k c_i = 1$. For each $i=1\dots,k$ set $V_i = \sqrt{c_i} I_{n+1}. $ Then we have
\[
\begin{pmatrix} X & \beta \\ \beta^* & \sum_{i=1}^k c_i \gamma^i \end{pmatrix} = \sum_{i=1}^k V_i^* \begin{pmatrix} X & \beta \\ \beta^* & \gamma^i \end{pmatrix} V_i \qquad \mathrm{and} \qquad \sum_{i=1}^k V_i^* V_i = 1. 
\]
From the definition of $\Gamma_{X,\beta}(K)$, the above is a matrix convex combination of elements of $K$. Since $K$ is matrix convex, it follows that
\[
\begin{pmatrix} X & \beta \\ \beta^* & \sum_{i=1}^k c_i \gamma^i \end{pmatrix} \in K,
\]
from which we obtain $\sum_{i=1}^k c_i \gamma^i \in \Gamma_{X,\beta} (K)$. That is, $\Gamma_{X,\beta}$ is convex.
\end{proof}

We now present our definition of maximal $1$-dilations for elements of a generalized free spectrahedron.  We mention that our definition is a generalization of the definition of maximal $1$-dilations given in \cite{EH19} which itself was inspired by works such as \cite{DM05}, \cite{A08}, and \cite{DK15}.

\begin{definition}
\label{def:max1diDef} 
Given a bounded generalized free spectrahedron $\cD_A \subset \SM{g}{}$ and a tuple $X \in \cD_A (n)$, say the dilation 
\[
\hat{Y}=\begin{pmatrix}
X & \hbeta \\
\hbeta^* & \hgamma
\end{pmatrix} \in \cD_A (n+1)
\]
is a \df{maximal $1$-dilation} of $X$ if $\hbeta \in M_{n \times 1}(\R)^g$ is nonzero and the following two conditions hold:
\begin{enumerate}
\item \label{it:1isMaximizerCompact} The real number $1$ satisfies
\[
\begin{array}{rllcl}  1 =&\underset{\alpha \in \R, \gamma \in \R^g}{\max} \ \ \ \ \alpha \\
\mathrm{s.t.} & L_A \begin{pmatrix}
X & \alpha \hbeta \\
\alpha \hbeta^* & \gamma
\end{pmatrix} \succeq 0 
\end{array}
\]
\item \label{it:hgamIsExtremeCompact} $\hgamma$ is an extreme point of the closed bounded convex set $\gengam{A}{\hbeta}$ where $\gengam{A}{\hbeta}$ is as defined in equation \eqref{eq:GammaSet}.
\end{enumerate}

\end{definition}

We now show that maximal $1$-dilations in generalized free spectrahedra reduce the dimension of the dilation subspace. 

\begin{theorem}
\label{theorem:1DiaReduceKerConDimCompact}
Let $A \in B(\cH)^g$ be a $g$-tuple of compact self-adjoint operators on $\cH$ such that $\cD_A \subset \SM{g}{}$ is a bounded real generalized free spectrahedron and let $X \in \cD_A (n)$. Assume $X$ is not an Arveson extreme point of $\cD_A$. Then there exists a nontrivial maximal 1-dilation $\hY \in \cD_A (n+1)$ of $X$. Furthermore, any such $\hY$ satisfies 
\[
\dim \disb{\cD_A}{\hY} < \dim \disb{\cD_A}{X}.
\]
\end{theorem}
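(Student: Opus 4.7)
The plan is to mirror the structure of the proof of Theorem \ref{theorem:1DiaReduceKerConDim}, but working directly with the single linear operator inequality $L_A(X) \succeq 0$ rather than the family indexed by $W \in \cI_B(n+1)$. The essential simplification is that Corollary \ref{cor:smallestEig} guarantees $L_A(X)$ has a smallest nonzero eigenvalue, so no compactness argument over an auxiliary index set is needed to extract uniform estimates.

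First I would establish existence of a maximal $1$-dilation. Since $X$ is not Arveson extreme, Lemma \ref{lem:DiSubPropertiesCompact} \eqref{it:XArvIFFDiSubIsZeroCompact} supplies a nonzero $\beta \in \gensub{X}$. Using that $\cD_A$ is closed and bounded together with Corollary \ref{cor:smallestEig} (which makes Schur complement manipulations with $L_A(X)^\dagger$ legitimate), the optimization
\[
\tilde{\alpha} := \sup\left\{\alpha \in \R : \exists\, \gamma \in \R^g \text{ with } L_A\begin{pmatrix} X & \alpha\beta \\ \alpha\beta^* & \gamma \end{pmatrix} \succeq 0 \right\}
\]
is strictly positive by Lemma \ref{lem:DiSubPropertiesCompact} \eqref{it:DiSubDilatesXCompact} (take $\gamma = 0$) and is achieved by a standard compactness argument (boundedness of $\cD_A$ plus closedness of the positive-semidefinite cone). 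Setting $\hat{\beta} = \tilde{\alpha}\beta$, the set $\Gamma_{X,\hat{\beta}}(\cD_A)$ is nonempty, closed, bounded, and convex by Lemma \ref{lem:GammaSetCompactConvex}, so it has an extreme point $\hat{\gamma}$. Then $\hat{Y} = \begin{pmatrix} X & \hat{\beta} \\ \hat{\beta}^* & \hat{\gamma} \end{pmatrix}$ is a maximal $1$-dilation.

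Next I would prove the dimension drop. Let $\hat{Y}$ be any maximal $1$-dilation of $X$ and define the projection
\[
\egensub{\hat{Y}} := \left\{\eta \in M_{n \times 1}(\R)^g : \exists\, \sigma \in \R^g \text{ with } \begin{pmatrix} \eta \\ \sigma \end{pmatrix} \in \gensub{\hat{Y}}\right\}.
\]
Using Lemma \ref{lem:DiSubPropertiesCompact} \eqref{it:DiSubDilatesXCompact} to produce a small $3$-dilation of $X$ of the form $\begin{pmatrix} X & \hat{\beta} & c\eta \\ \hat{\beta}^* & \hat{\gamma} & \sigma \\ c\eta^* & \sigma^* & 0\end{pmatrix}$ and compressing away the middle block shows $\egensub{\hat{Y}} \subseteq \gensub{X}$, hence $\dim \egensub{\hat{Y}} \leq \dim \gensub{X}$. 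Now suppose for contradiction that $\dim \gensub{\hat{Y}} \geq \dim \gensub{X}$. Then either $\egensub{\hat{Y}} = \gensub{X}$, in which case one produces a nonzero $c \in \R$ and some $\sigma \in \R^g$ so that
\[
L_A\begin{pmatrix} X & \hat{\beta} & c\hat{\beta} \\ \hat{\beta}^* & \hat{\gamma} & \sigma \\ c\hat{\beta}^* & \sigma^* & 0 \end{pmatrix} \succeq 0,
\]
or $\dim \egensub{\hat{Y}} < \dim \gensub{X}$, which forces two distinct $\sigma^1, \sigma^2 \in \R^g$ lying over a common $\eta$ in $\gensub{\hat{Y}}$, giving a nonzero $\sigma := \alpha(\sigma^1 - \sigma^2)$ with $c = 0$ achieving the same inequality. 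In either case, at least one of $c$ or $\sigma$ is nonzero.

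The heart of the argument is then an NC LDL$^*$ manipulation on this $3\times 3$ block. Applying the Schur complement against $L_A(X)$ and setting $Q := \Lambda_A(\hat{\beta}^*) L_A(X)^\dagger \Lambda_A(\hat{\beta})$, the inequality above is equivalent to $I - c^2 Q \succeq 0$ together with
\[
L_A(\hat{\gamma}) - Q \succeq 0 \quad\text{and}\quad \ker[L_A(\hat{\gamma}) - Q] \subseteq \ker[\Lambda_A(\sigma) - cQ].
\]
Corollary \ref{cor:smallestEig} ensures that $L_A(\hat{\gamma}) - Q$ is compact self-adjoint with a smallest nonzero eigenvalue, so a single scalar $\tilde{\alpha} > 0$ can be chosen (directly, without any auxiliary $\cI_B$-compactness) so that
\[
L_A(\hat{\gamma}) - Q \pm \tilde{\alpha}\bigl(\Lambda_A(\sigma) - cQ\bigr) \succeq 0.
\]
Undoing the Schur complement (and using that $L_A(X) \succeq 0$) then yields
\[
L_A\begin{pmatrix} X & \sqrt{1 \pm c\tilde{\alpha}}\,\hat{\beta} \\ \sqrt{1 \pm c\tilde{\alpha}}\,\hat{\beta}^* & \hat{\gamma} \pm \tilde{\alpha}\sigma \end{pmatrix} \succeq 0.
\]
Condition \eqref{it:1isMaximizerCompact} of Definition \ref{def:max1diDef} forces $\sqrt{1 \pm c\tilde{\alpha}} \leq 1$, i.e.\ $c\tilde{\alpha} = 0$, and since $\tilde{\alpha} > 0$ we get $c = 0$. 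But then $\sigma \neq 0$ by construction, and $\hat{\gamma} \pm \tilde{\alpha}\sigma \in \Gamma_{X,\hat{\beta}}(\cD_A)$, contradicting extremality of $\hat{\gamma}$ in $\Gamma_{X,\hat{\beta}}(\cD_A)$ from Definition \ref{def:max1diDef} \eqref{it:hgamIsExtremeCompact}. Hence $\dim \gensub{\hat{Y}} < \dim \gensub{X}$, which by Lemma \ref{lem:DiSubPropertiesCompact} \eqref{it:XArvIFFDiSubIsZeroCompact} is exactly $\dim \disb{\cD_A}{\hat{Y}} < \dim \disb{\cD_A}{X}$.

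The main obstacle I anticipate is the bookkeeping for the Schur-complement / NC LDL$^*$ step: one has to verify that $Q$ and $L_A(\hat{\gamma})$ interact correctly as unbounded-range operators on $\cH \otimes \R^{n+1}$ and that the Moore--Penrose pseudoinverse of $L_A(\hat{\gamma}) - Q$ yields a well-defined perturbation. This is where Corollary \ref{cor:smallestEig} is indispensable, because it rules out the possibility that the nonzero spectrum accumulates at $0$, which would prevent extracting the uniform $\tilde{\alpha} > 0$ that the argument requires.
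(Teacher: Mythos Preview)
Your proposal is correct and matches the paper's proof essentially step for step: existence via compactness and Lemma~\ref{lem:GammaSetCompactConvex}, the projection $\egensub{\hat Y}\subseteq\gensub{X}$, the two-case contradiction setup, the NC LDL$^*$ computation with $Q=\Lambda_A(\hat\beta^*)L_A(X)^\dagger\Lambda_A(\hat\beta)$, and the final $c=0$/extremality contradiction. One small slip to fix: $L_A(\hat\gamma)-Q$ is not itself compact but rather $I_{\cH}-(\Lambda_A(\hat\gamma)+Q)$ with $\Lambda_A(\hat\gamma)+Q$ compact, so the smallest-nonzero-eigenvalue claim should invoke Lemma~\ref{lem:smallestEig} directly rather than Corollary~\ref{cor:smallestEig}.
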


\begin{proof}

The existence of maximal $1$-dilations in a bounded real generalized free spectrahedron follows from a routine compactness argument together with Lemma \ref{lem:GammaSetCompactConvex}.

Now, let 
\[
\hY=\begin{pmatrix}
X & \hbeta \\
\hbeta^* & \hgamma
\end{pmatrix} ,
\] be a maximal $1$-dilation of $X$. Using Lemma \ref{lem:DiSubPropertiesCompact} \eqref{it:XArvIFFDiSubIsZeroCompact}, it is sufficient to show that
\[
\dim \gensub{\hY} < \dim \gensub{X}.
\]
First consider the subspace
\[
\egensub{\hY}:=\left\{ \eta \in M_{n \times 1} (\R)^g \  : \ \exists \  \sigma\in \R^g \mathrm{\ s.t. \ } \begin{pmatrix} \eta \\
\sigma 
\end{pmatrix} \in \gensub{\hY} \right\}.
\]
\index{$\egensub{\hY}$}
In other words $\egensub{\hY}$ is the projection $\iota$ of $\gensub{\hY}$ defined by
\[
\egensub{Y}:=\iota (\gensub{\hY}) \mathrm{\ where \ } \iota \begin{pmatrix} \eta \\
\sigma 
\end{pmatrix}=\eta,
\]
for $\eta \in M_{n \times 1} (\R)^g$ and $\sigma \in \R^g$. We will show 
\[
\dim \egensub{\hY} < \dim \gensub{X}.
\]

If $\eta \in \egensub{\hY}$, then there exists a tuple $\sigma\in \R^g$ such that 
\[
\begin{pmatrix} \eta^* & \sigma \end{pmatrix} \in \gensub{\hY}.
\]
From Lemma \ref{lem:DiSubPropertiesCompact} \eqref{it:DiSubDilatesXCompact}, it follows that there is a real number $c>0$ so that 
\[
\begin{pmatrix}
X & \hbeta & c\eta \\
\hbeta^* & \hgamma & c\sigma \\
c\eta^* &  c\sigma & 0 
\end{pmatrix}
\in \cD_A.
\]
Since $\cD_A$ is matrix convex it follows that
\[
\begin{pmatrix}
1 & 0 & 0 \\
0 & 0 & 1
\end{pmatrix}
\begin{pmatrix}
X & \hbeta & c\eta \\
\hbeta^* & \hgamma & \sigma \\
c\eta^* &  \sigma & 0 
\end{pmatrix}
\begin{pmatrix}
1 & 0  \\
0 & 0 \\
0 & 1
\end{pmatrix}=
\begin{pmatrix} 
X & c \eta \\
c \eta^* & 0 
\end{pmatrix} \in \cD_A,
\]
so Lemma \ref{lem:DiSubPropertiesCompact} \eqref{it:DiBetaIsInDiSubCompact} shows $\eta \in \gensub{X}$. In particular this shows
\beq
\label{eq:diKerConSubset}
\egensub{\hY} \subset \gensub{X} \qquad \mathrm{hence} \qquad
\dim \egensub{\hY} \leq \dim \gensub{X}.
\eeq

Now, assume towards a contradiction that 
\[
\dim \gensub{\hY} \geq \dim \gensub{X}.
\]
We next show that this implies that there is a real number $c$ and a tuple $\sigma \in \R^g$ such that
\beq
\label{eq:badBetaDi}
L_A \begin{pmatrix}
X & \hat{\beta} & c \hat{\beta} \\
\hat{\beta}^* & \hat{\gamma} & \sigma \\
c\hat{\beta}^* & \sigma & 0
\end{pmatrix}\succeq 0,
\eeq
and such that either $ c \neq 0$ or $\sigma \neq 0$. To see this, observe that equation \eqref{eq:diKerConSubset} implies that if $\dim \egensub{\hY} = \dim \gensub{X}$, then we must have 
\[
\egensub{\hY} = \gensub{X}.
\]
In this case, using Lemma \ref{lem:DiSubPropertiesCompact} \eqref{it:DiSubDilatesXCompact} shows that there is a nonzero $c \in \R$ and some (possibly zero) $\sigma \in \R^g$ such that inequality \eqref{eq:badBetaDi} holds. On the other hand, if $\dim \egensub{\hY} < \dim \gensub{X}$, then there must exist tuples $\eta \in M_{n \times 1} (\R)^g$ and $\sigma^1, \sigma^2 \in \R^g$ such that $\sigma^1 \neq \sigma^2$ and so
\beq
\label{eq:hsigmaKerCon}
\begin{pmatrix}
\eta \\
\sigma^1
\end{pmatrix},
\begin{pmatrix}
\eta \\
\sigma^2
\end{pmatrix} \in \gensub{Y}, \qquad \mathrm{hence} \qquad \begin{pmatrix}
0 \\
\sigma^1-\sigma^2
\end{pmatrix} \in \gensub{Y}.
\eeq
In this case, setting $\sigma = \alpha(\sigma^1 - \sigma^2)$ for an appropriately chosen constant $\alpha > 0$ and again using using Lemma \ref{lem:DiSubPropertiesCompact} \eqref{it:DiSubDilatesXCompact} shows that there is a (possibly zero) real number $c$ and a nonzero tuple $\sigma$ such that inequality \eqref{eq:badBetaDi} holds.

We now use inequality \eqref{eq:badBetaDi} together with the NC LDL$^*$-decomposition to show that $\hY$ cannot be a maximal $1$-dilation of $X$, a contradiction to our definition of $\hY$. Applying the NC LDL$^*$-decomposition (up to canonical shuffles) shows that inequality \eqref{eq:badBetaDi} holds if and only if $L_A (X) \succeq 0$ and the Schur complements
\beq
\label{eq:3LDLCompactMiddleCompact}
I_\cH-c^2 Q \succeq 0
\eeq
and
\beq
\label{eq:3LDLCompact}
L_A (\hgamma)-Q-\left(\Lambda_A(\sigma)-cQ\right)^*\left(I_\cH-c^2 Q\right)^\dagger \left(\Lambda_A(\sigma)-cQ\right) \succeq 0
\eeq
where 
\beq
\label{eq:QdefCompact}
Q:= \Lambda_A (\hbeta^*) L_A (X)^\dagger \Lambda_A (\hbeta).
\eeq
It follows that
\beq
\label{eq:gammaSchurCompact}
L_A (\hat{\gamma})-Q \succeq 0
\eeq
and
\beq
\label{eq:3diaKerContainCompact}
\ker  [L_A (\hat{\gamma})-Q] \subseteq \ker [\Lambda_A(\sigma)-c Q].
\eeq

Recall from Corollary \ref{cor:smallestEig} and Lemma \ref{lem:CompactLOI} that $L_A (X)^\dagger$ is a bounded self-adjoint operator and $\Lambda_A (\hbeta)$ and $\Lambda_A (\hbeta^*)$ are compact operators. It follows that $Q$ is a compact self-adjoint operator. Therefore 
\[
L_A (\hat{\gamma})-Q=I_\cH-(\Lambda_A (\hat{\gamma})+Q)
\]
is the identity minus a compact self-adjoint operator and by Lemma \ref{lem:smallestEig} has a smallest nonzero eigenvalue. Therefore, picking $\tilde{\alpha} > 0$ so that $\tilde{\alpha}\|\Lambda_A(\sigma)-c Q\|$ is smaller than the smallest nonzero eigenvalue of $L_A (\hat{\gamma})-Q$ and using inequalities \eqref{eq:gammaSchurCompact} and \eqref{eq:3diaKerContainCompact} guarantees
\[
L_A (\hat{\gamma})-Q \  \pm \ \tilde{\alpha} \left(\Lambda_A(\sigma)-c Q\right) \succeq 0.
\]
It follows from the above that
\beq
\label{eq:betterBetaSchurCompact}
\begin{array}{rllcl}
&L_A (\hat{\gamma} \pm \tilde{\alpha} \sigma)- (1 \pm c\tilde{\alpha} )Q\\
=& L_A (\hat{\gamma} \pm \alpha \sigma)-\left(\Lambda_A (\sqrt{1 \pm c\tilde{\alpha} }\hat{\beta}^*)L_A(X)^\dagger\Lambda_A(\sqrt{1 \pm c\tilde{\alpha} }\hat{\beta})\right)& \succeq & 0.
\end{array}
\eeq
Since $L_A(X) \succeq 0$, equation \eqref{eq:betterBetaSchurCompact} implies
\beq
\label{eq:betterBetaDiCompact}
L_A \begin{pmatrix}
X & \sqrt{1 \pm c\tilde{\alpha} }\hat{\beta} \\
\sqrt{1 \pm c\tilde{\alpha} }\hat{\beta}^* & \hat{\gamma}\pm \tilde{\alpha}  \sigma
\end{pmatrix}\succeq 0.
\eeq
Recalling that $\hY$ is a maximal $1$-dilation of $X$, we must have 
\[
\sqrt{1 \pm c\tilde{\alpha} }  \leq 1.
\]
It follows that $c\tilde{\alpha} =0$. Since $\tilde{\alpha} > 0$, it follows that $c = 0$. From our construction, this in turn implies that $\sigma \neq 0$. But then equation \eqref{eq:betterBetaDiCompact} implies that 
\[
\hat{\gamma}\pm \tilde{\alpha}\sigma \in \gengam{X}{\hbeta},
\] 
which contradicts the fact that $\hat{\gamma}$ is an extreme point of the convex set $\gengam{X}{\hbeta}$. We conclude that
\[
\dim \gensub{\hY} < \dim \gensub{X},
\]
from which we can use Lemma \ref{lem:DiSubPropertiesCompact} \eqref{it:XArvIFFDiSubIsZeroCompact} to conclude that
\[
\disb{\cD_A}{\hY} < \disb{\cD_A}{X},
\]
as claimed.
\end{proof}

\section{From reals to complexes}
\label{sec:RealVSComplex}

We now show how to reduce spanning over the complexes to spanning over the reals. The main goal of the section is to generalize \cite[Section 3]{EH19} which considers free spectrahedra to general matrix convex sets. We note that while \cite[Section 3]{EH19} restricted to free spectrahedra, many of the proofs there only relied on matrix convexity and immediately generalize to our present setting. Proofs of such results will be given in an online appendix.

We begin by relating the Arveson extreme points of a matrix convex set over the complexes to the Arveson extreme points of the associated matrix convex set over the reals. 

\begin{lemma}
\label{lemma:RealVsComplexArvBoundary}
Let $K \subset SM(\C)^g$ be a matrix convex set such that $K = \overline{K}$, and let $K^\R$ denote $K \cap SM(\R)^g$. 
\begin{enumerate}
	\item \label{it:ArvConjClosed} $X \in \arv(K)$ if and only if $\overline{X} \in \arv(K)$.
	\item \label{it:RealArvIsComplexArv} If $X \in K^\R$, then $X \in \arv(K)$ if and only if $X \in \arv(K^\R)$.
\end{enumerate}  
\end{lemma}

\begin{proof}
See Appendix \ref{App:RealComplexArvBoundaryProof}.
\end{proof}

\begin{remark}
It may not be the case that the real free extreme points of $K^\R$ are free extreme points of $K$. This is due to the issue of reducibility over $\R$ vs $\C$. For example, if $X \in K$ is a free extreme point of $K$ with complex part $X_\C \neq 0$, then the tuple $U_m (X \oplus \overline{X})U$ is reducible over $\C$ while it is irreducible over $\R$. Therefore, $U_m (X \oplus \overline{X})U$ is a free extreme point of $K^\R$, but not a free extreme point of $K$. 
\end{remark}

\subsection{Proof of Theorem \ref{theorem:ComplexReductionToReals}}

\label{sec:ProofOfReductionToReals}

The main tool we use in this proof is the standard embedding of a tuple of complex self-adjoint matrices to a tuple of real symmetric matrices. To this end, we define $U_m$ to be the unitary
\begin{equation}
\label{eq:EmbeddingUnitary}
U_m:= \frac{\sqrt{2}}{2}
\begin{pmatrix}
I_{m} & -i I_{m} \\
-i I_{m} & I_{m} 
\end{pmatrix} \in M_{2m} (\C).
\end{equation}
A key fact is that if $X \in \SMC{g}{m}$ with real and imaginary parts $X_\R$ and $X_\C$ then
\[
U_m^* (X \oplus \overline{X}) U_m = \begin{pmatrix}
X_\R & X_\C \\
X_\C^T & X_\R
\end{pmatrix}.
\]
Note that the above expansion makes use of the fact that $X_\C = -X_\C^T$ is a tuple of skew symmetric matrices.
\begin{proof}

We first show that that $K = \comat(\free(K))$ implies $K^\R  = \comat(\free(K^\R ))$. To this end, let $X \in \K^R \subset \K$. By assumption, there exists a tuple
\[
Y = \begin{pmatrix}
X & \beta \\
\beta^* & \gamma
\end{pmatrix} \in K(m),
\]
for some appropriate $m$ such that $Y \in \arv(K)$. Since $K$ is closed under complex conjugation $\overline{Y} \in K$. Moreover, we have that $X$ dilates to
\[
U_{m}^*
(Y \oplus \overline{Y})
U_{m} = \begin{pmatrix} Y_\R & Y_\C \\
Y_\C^T & Y_\R
\end{pmatrix}
=\begin{pmatrix}
X & \beta_\R & 0 & \beta_\C \\
\beta_\R^T & \gamma_\R & -\beta_\C^T & \gamma_\C  \\
0 & -\beta_\C & X & \beta_\R \\
\beta_\C^T & \gamma_\C^T & \beta_\R^T & \gamma_\R
\end{pmatrix} \in K^\R(2m),
\]
As the Arveson boundary is closed under unitary conjugation and since the above tuple is real valued, we can apply Lemma \ref{lemma:RealVsComplexArvBoundary} \eqref{it:RealArvIsComplexArv} to conclude that
\[
U_{m}^*
(Y \oplus \overline{Y})
U_{m} \in \arv(K^\R),
\]
is an Arveson dilation of $X$ in $K^\R$. It follows that $K^\R= \comat(\free(K^\R))$.

We now show that $K^\R  = \comat(\free(K^\R ))$ implies $K = \comat(\free(K))$. To this end let $X \in K(n)$. Since $K$ is closed under complex conjugation, we have $X \oplus \overline{X}$ in $K$, hence
\[
Z:= U_n^*
\begin{pmatrix}
X & 0 \\
0 & \overline{X}
\end{pmatrix}
U_n = \begin{pmatrix} X_\R & X_\C \\
X_\C^T & X_\R
\end{pmatrix}
\in{K^\R}. 
\]
By assumption $\K^R$ is the matrix convex hull of its free extreme points, so $Z$ dilates to an Arveson extreme point $Y \in \arv(K^\R (m))$ for some appropriate $m$. Again using Lemma \ref{lemma:RealVsComplexArvBoundary} \eqref{it:RealArvIsComplexArv} shows that $Y \in \arv(K)$. By decomposing $Y$ into its irreducible (over $\C$) components, we obtain an expression for $X$ as matrix convex combination of free extreme points of $K$ whose sum of sizes is at most $m$.
\end{proof}

\subsection{The real part of a complex generalized free spectrahedron}

To use Theorem \ref{theorem:ComplexReductionToReals} to reduce to the real case in the proof of Theorem \ref{theorem:AbsSpanGeneralized}, we need to prove that the real part of a complex generalized free spectrahedron is indeed a real generalized free spectrahedron. This is handled by the following lemma.

\begin{lemma}
\label{lem:RealPartOfGenSpec}
Let $\cD_A \subset \SMC{g}{}$ be a complex generalized free spectrahedron. Then $\cD_A^\R = \cD_A \cap \SM{g}{}$ is a real generalized free spectrahedron. 
\end{lemma}
\begin{proof}
The proof follows the same approach as the proof of \cite[Lemma 3.3]{EH19}. Details are given in Appendix \ref{App:RealPartOfGenSpecProof}. \end{proof}

Finally, for the dimension bound on the sum of sizes of free extreme points required for a free Caratheodory expansion that is presented in Theorem \ref{theorem:AbsSpanGeneralized}, we extend \cite[Lemma 3.2]{EH19} from free spectrahedra to general matrix convex sets.

\begin{lemma}
\label{lem:DilationSubspaceIdentities}
Let $K \subset \SMF{g}{}$ be a matrix convex set. Then we have the following:
\begin{enumerate}
\item \label{it:DirectSumDiSub} Let $X \in K(n)$ and $Y \in K(m)$. Then
\[
\disb{K}{X \oplus Y}  = \left\{\left( \beta^* \ \sigma^* \right)^* \in M_{(n+m) \times 1} (\F)^g | \ \beta \in \disb{K}{X} \mathrm{\ and \ } \sigma \in \disb{K}{Y} \right\},
\]
and
\[
\dim \disb{K}{X \oplus Y} = \dim \disb{K}{X}+\dim \disb{K}{Y}.
\]

\item \label{it:UnitaryDiSub} Let $X \in K(n)$ and let $U \in M_n (\F)$ be a unitary. 
\[
\disb{K}{X} = U^* \disb{K}{U^*XU} \qquad \mathrm{and} \qquad \dim \disb{K}{X} = \dim \disb{K}{U^* X U}.
\]

\item \label{it:ConjugateDiSub} If $K$ is closed under complex conjugation, then for any $X \in K$, 
\[
\disb{K}{X} = \overline{\disb{K}{\overline{X}}} \qquad \mathrm{and} \qquad \dim \disb{K}{X} = \dim \disb{K}{\overline{X}}.
\]

\item \label{it:RealEmbedDiSub} If $K$ is closed under complex conjugation, then for any $X \in K(n)$, \[\dim \disb{K^\R}{U_n^* \left(X \oplus \overline{X}\right)U_n} \leq 2\dim \disb{K}{X}, \]
where $U_n$ is the unitary defined in equation \eqref{eq:EmbeddingUnitary}.
\end{enumerate}
\end{lemma}
\begin{proof}
See Appendix \ref{App:DiSubIdProofs}.
\end{proof}

\bibliographystyle{siamplain}
\bibliography{../References/MatConvexRefs}

\newpage

\section{Appendix}
\label{sec:RealVsComplexProofs}

The appendix contains proofs of several of the lemmas presented in Section \ref{sec:RealVSComplex}

\subsection{Proof of Lemma \ref{lemma:RealVsComplexArvBoundary}}
\label{App:RealComplexArvBoundaryProof}

\begin{proof}
The proof follows the same approach as \cite[Lemma 3.1]{EH19}. We first prove item \eqref{it:ArvConjClosed}. Since $K$ is closed under complex conjugation, for any $\beta \in M_{n \times 1} (\C)^g$ and $\gamma \in \R^g$, we find
\[
 \begin{pmatrix}
X & \beta \\
\beta^* & \gamma
\end{pmatrix} \in K \qquad \mathrm{if \ and \ only \ if} \qquad Y = \begin{pmatrix}
\overline{X} & \overline{\beta} \\
\overline{\beta}^* & \overline{\gamma}
\end{pmatrix} \in K.
\]
Therefore, there is a nontrivial dilation of $X$ in $K$ if and only if there is a nontrivial dilation of $\overline{X}$ in $K$. 

We next prove item \eqref{it:RealArvIsComplexArv}. Let $X \in K^\R$ and observe if $X$ cannot be trivially dilated over $\C$, then $X$ also cannot be trivially dilated over $\R$, hence it is clear that $X \in \arv (K)$ implies $X \in \arv(K^\R)$. Now assume $X \in \arv(K^\R)$ and suppose $X$ dilates (over $\C$) to
\[
Y = \begin{pmatrix} X & \beta \\
\beta^* & \gamma \end{pmatrix} \in K(m).
\]
We will show that $\beta=0$. 

Since $K$ is closed under complex conjugation, $\overline{Y} \in K(m)$,
hence $X$ dilates to $Y \oplus \overline{Y} \in K(2m)$. Now, let $Y_\R,\beta_\R,\gamma_R$ and $Y_\C,\beta_\C,\gamma_C$ denote the real and imaginary parts of $Y,\beta,\gamma$, respectively. Then since $K$ is closed under unitary conjugation, we obtain
\[
U_m^*
\begin{pmatrix}
Y & 0 \\
0 & \overline{Y}
\end{pmatrix}
U_m = \begin{pmatrix} Y_\R & Y_\C \\
Y_\C^T & Y_\R
\end{pmatrix}
=\begin{pmatrix}
X & \beta_\R & 0 & \beta_\C \\
\beta_\R^T & \gamma_\R & -\beta_\C^T & \gamma_\C  \\
0 & -\beta_\C & X & \beta_\R \\
\beta_\C^T & \gamma_\C^T & \beta_\R^T & \gamma_\R
\end{pmatrix} \in K^\R(2m).
\]
Here we have used the fact that $X$ is real valued and that $Y_\C$ is skew-symmetric, which follows from the fact that $Y$ is self-adjoint. Now, since $X$ is an Arveson extreme point of $K^\R$, we know that $X$ cannot be nontrivially dilated in $K^\R$, hence $\beta_\R =\beta_\C =0$. From this we obtain $\beta=0$ which proves that $X$ is an Arveson extreme point of $K$. 
\end{proof}

\subsection{Proof of Lemma \ref{lem:RealPartOfGenSpec}}
\label{App:RealPartOfGenSpecProof}

\begin{proof}
Let $\cD_A \subset \SMC{g}{}$ be a generalized free spectrahedron and without loss of generality take $\cH = \ell^2(\mathbb{N})$ as a Hilbert space over $\C$. Then, since $\cD_A$ is closed under complex conjugation, for $X \in M_n (\R)$, we have
\[
L_A (X) \succeq 0\qquad  \mathrm{if\ and \ only \ if} \qquad L_{\overline{A}} (X) \succeq 0,
\]
where $\overline{A} \in B(\ell^2(\mathbb{N}))$ is defined by  $\overline{A}v = \overline{A \overline{v}}$ for all $v \in \ell^2(\mathbb{N})$. Letting $U_\cH$ denote the unitary
\[
\frac{\sqrt{2}}{2} \begin{pmatrix}
I_\cH &-i I_\cH \\
-I_\cH & I_\cH
\end{pmatrix} \in B(\ell^2(\N)\oplus \ell^2(\N)),
\]
we find that each element of the compact operator tuple $U_\cH^*(A \oplus \overline{A}) U_\cH$ maps the real part of $\ell^2(\N)\oplus \ell^2(\N)$ to the real part of $\ell^2(\N)\oplus \ell^2(\N)$. Therefore, we can treat $U_\cH^*(A \oplus \overline{A}) U_\cH$ as a tuple of bounded compact operators on $\ell^2(\N)\oplus \ell^2(\N)$ viewed a Hilbert space over $\R$. From this we obtain that $\cD_A^\R = \cD_{U_\cH^*(A \oplus \overline{A}) U_\cH}^\R$ is a real generalized free spectrahedron.
\end{proof}

\subsection{Proof of Lemma \ref{lem:DilationSubspaceIdentities}}
\label{App:DiSubIdProofs}

\begin{proof}

We first prove Item \eqref{it:DirectSumDiSub}. When working with a generalized free spectrahedron, one can apply Lemma \ref{lem:DiSubPropertiesCompact} together with the strategy used to prove \cite[Lemma 3.2 (1)]{EH19}. However, an alternative approach is needed for general matrix convex sets. For this case, Let $X \in K(n)$ and $Y \in K(m)$ and let $\beta \in \disb{K}{X}$ and $\eta \in \disb{K}{Y}$. Then using Lemma \ref{lem:DilatingBetaAreSubspace}, there exist constants $c_1,c_2 > 0$ and tuples $\gamma,\sigma \in \R^g$ such that
\[
\begin{pmatrix}
X & 0 & c_1\beta \\
0 & Y & 0 \\
c_1 \beta^* & 0 & \gamma
\end{pmatrix} \in K \qquad \mathrm{and} \qquad
\begin{pmatrix}
X & 0 & 0 \\
0 & Y & c_2\eta \\
0 & c_2\eta^* & \sigma
\end{pmatrix} \in K.
\]
It follows that
\[
\begin{pmatrix}
\beta^* &
0
\end{pmatrix},\begin{pmatrix}
0 &
\eta^*
\end{pmatrix} \in \disb{X \oplus Y}{K}, 
\]
from which we obtain
\[
\disb{K}{X \oplus Y}  \supseteq \left\{\left( \beta^* \ \eta^* \right)^* \in M_{(n+m) \times 1} (\F)^g | \ \beta \in \disb{K}{X} \mathrm{\ and \ } \eta \in \disb{K}{Y} \right\}.
\]

For the reverse containment, suppose $\left( \beta^* \ \eta^* \right)^* \in \disb{X \oplus Y}{K}$. Again using Lemma \ref{lem:DilatingBetaAreSubspace} shows there is a constant $c > 0$ and a tuple $\gamma \in \R^g$ such that
\[
\begin{pmatrix}
X & 0 & c \beta \\
0 & Y & c\eta \\
c \beta^* & c\eta^* & \sigma
\end{pmatrix} \in K.
\]
Using the matrix convexity of $K$, we obtain
\[
\begin{pmatrix}
1 & 0 & 0 \\
0 & 0 & 1
\end{pmatrix}\begin{pmatrix}
X & 0 & c \beta \\
0 & Y & c\eta \\
c \beta^* & c\eta^* & \gamma
\end{pmatrix} 
\begin{pmatrix}
1 & 0 \\
0 & 0 \\
0 & 1
\end{pmatrix} = \begin{pmatrix}
X & c \beta \\
c \beta^* & \gamma
\end{pmatrix}  \in K,
\]
and similarly
\[
\begin{pmatrix}
0 & 1 & 0 \\
0 & 0 & 1
\end{pmatrix}
\begin{pmatrix}
X & 0 & c \beta \\
0 & Y & c\eta \\
c \beta^* & c\eta^* & \gamma
\end{pmatrix} 
\begin{pmatrix}
0 & 0 \\
1 & 0 \\
0 & 1
\end{pmatrix} = \begin{pmatrix}
X & c \eta \\
c \eta^* & \gamma
\end{pmatrix}  \in K.
\]
We conclude $\beta \in \disb{K}{X}$ and $\eta \in \disb{K}{Y}$, which proves the reverse containment.

The proof of Items \eqref{it:UnitaryDiSub} and \eqref{it:ConjugateDiSub} use the same approach of \cite[Lemma 3.2 (2) and (3)]{EH19} together with an application of Lemma \ref{lem:DilatingBetaAreSubspace}. In particular, for a unitary $U \in M_n(\F)$, we have
\[
\begin{pmatrix} 
X & \beta \\
\beta^* & \gamma
\end{pmatrix} \in K \iff
\begin{pmatrix}
U^* X U & U^* \beta \\
\beta^* U & \gamma
\end{pmatrix} =
\begin{pmatrix}
U^* & 0 \\
0 & 1
\end{pmatrix}
\begin{pmatrix} 
X & \beta \\
\beta^* & \gamma
\end{pmatrix}
\begin{pmatrix}
U & 0 \\
0 & 1
\end{pmatrix} \in K
\]
Similarly, if $K$ is closed under complex conjugation, then we immediately have
\[
\begin{pmatrix} 
X & \beta \\
\beta^* & \gamma
\end{pmatrix} \in K \iff \begin{pmatrix} 
\overline{X} & \overline{\beta} \\
\overline{\beta^*} & \overline{\gamma}
\end{pmatrix} \in K.
\]
Using the above with Lemma \ref{lem:DilatingBetaAreSubspace} completes the proof of Items \eqref{it:UnitaryDiSub} and \eqref{it:ConjugateDiSub}.

Item \eqref{it:RealEmbedDiSub} is an immediate consequence of Items \eqref{it:DirectSumDiSub}, \eqref{it:UnitaryDiSub}, and \eqref{it:ConjugateDiSub}. \end{proof}

\end{document}